\documentclass[11pt]{amsart}

\usepackage{amsmath}
\usepackage{amsfonts}
\usepackage{amssymb}
\usepackage{amscd}
\usepackage{amsthm}
\usepackage{filecontents}
\usepackage{framed}
\usepackage{fullpage}
\usepackage{latexsym}

\usepackage{hyperref}
\hypersetup{colorlinks,linkcolor={red},citecolor={blue},urlcolor={blue}}

\evensidemargin\oddsidemargin

\newtheorem{theorem}{Theorem}[section]

\newtheorem{corollary}[theorem]{Corollary}
\newtheorem{proposition}[theorem]{Proposition}

\theoremstyle{definition}
\newtheorem{definition}[theorem]{Definition}
\newtheorem{assumption}[theorem]{Assumption}
\newtheorem{remark}[theorem]{Remark}

\numberwithin{equation}{section}

\newcommand{\CC}{\mathbb C}
\newcommand{\HH}{\mathbb H}
\newcommand{\NN}{\mathbb N}
\newcommand{\PP}{\mathbb P}
\newcommand{\QQ}{\mathbb Q}

\newcommand{\ZZ}{\mathbb Z}

\newcommand{\cD}{\mathcal D}
\newcommand{\cE}{\mathcal E}
\newcommand{\cH}{\mathcal H}
\newcommand{\SL}{\mathop{\mathrm {SL}}\nolimits}

\newcommand{\SO}{\mathop{\mathrm {SO}}\nolimits}

\newcommand{\Orth}{\mathop{\null\mathrm {O}}\nolimits}
\newcommand{\rank}{\mathop{\mathrm {rank}}\nolimits}
\newcommand{\latt}[1]{{\langle{#1}\rangle}}
\newcommand{\Kthree}{\mathop{\mathrm {K3}}\nolimits}

\newcommand{\II}{\operatorname{II}}

\newcommand{\im}{\operatorname{Im}}

\newcommand{\Proj}{\operatorname{Proj}}
\newcommand{\w}{\operatorname{w}}
\newcommand{\Div}{\operatorname{Div}}

\usepackage{longtable}

\usepackage{tikz}
\usetikzlibrary{positioning}
\usetikzlibrary{arrows}

\newenvironment{psmallmatrix}
  {\left(\begin{smallmatrix}}
  {\end{smallmatrix}\right)}

\begin{document}

\title[On some free algebras of orthogonal modular forms]{On some free algebras of orthogonal modular forms}

\author{Haowu Wang}

\address{Max-Planck-Institut f\"{u}r Mathematik, Vivatsgasse 7, 53111 Bonn, Germany}

\email{haowu.wangmath@gmail.com}

\author{Brandon Williams}

\address{Fachbereich Mathematik, Technische Universitat Darmstadt, 64289 Darmstadt, Germany}

\email{bwilliams@mathematik.tu-darmstadt.de}

\subjclass[2010]{11F50,11F55}

\date{\today}

\keywords{Symmetric domains of type IV, modular forms on orthogonal groups, Jacobi forms, root systems, free algebras}

\begin{abstract}
For 25 orthogonal groups of signature $(2,n)$ related to the root lattices $A_1$, $2A_1$, $3A_1$, $4A_1$, $A_2$, $A_3$, $A_4$, $A_5$, $A_6$, $A_7$, $D_4$, $D_5$, $D_6$, $D_7$, $D_8$, $E_6$, $E_7$,  we prove that the algebras of modular forms on symmetric domains of type IV are freely generated by the additive lifts of some special Jacobi forms. The proof is universal and elementary. 
\end{abstract}

\maketitle

\section{Introduction}

It is an interesting problem to determine the structure of the algebras of automorphic forms on symmetric domains $\cD$ of dimension greater than 2.  The simplest possible structure is that of a free algebra, but free algebras of modular forms are relatively rare. In general, the algebra of modular forms $M_*(\Gamma)$ for a congruence group $\Gamma$ acting on $\cD$ is freely generated if and only if the Satake-Baily-Borel compactification $(\cD / \Gamma)^*$ is a weighted projective space. Equivalently, if $\mathcal{A}$ denotes the affine cone over $\cD$, then $M_*(\Gamma)$ is free if and only if $(\mathcal{A}/\Gamma)^*$ is nonsingular. It is known, for example, that algebras of modular forms for orthogonal groups of signature $(2,n)$ never satisfy this condition when $n > 10$ \cite{SV17}.  There is a classification of free algebras of Hilbert modular forms in \cite{Stu19}, which shows that the free algebras of modular forms for orthogonal groups of signature (2,2) are very rare.

The first example of such an algebra was found by Igusa, who proved in \cite{Igu62} that the algebra of even-weight Siegel modular forms of genus 2 is freely generated by forms of weights 4, 6, 10, 12. Siegel modular forms of genus 2 can be realized as modular forms for the orthogonal group $\Orth(2,3)$. After Igusa, several other authors constructed free algebras of $\Orth(2,n)$-modular forms, e.g. \cite{AI05, Aok00, DK03, DK06, Klo05, Kri05, Woi18} using the theory of modular forms and \cite{FH00, FS07, HU14, Vin10, Vin13, Vin18} using more geometric methods. As a continuation of this type of work,  in this paper we prove that the spaces of modular forms for 25 orthogonal groups are free algebras in a universal way.  Our approach is new and gives a general rule to characterize the weights of generators.

\subsection{Orthogonal modular forms}
Orthogonal modular forms are automorphic forms on symmetric domains of type IV for orthogonal groups of signature $(2,n)$.  Let $M$ be an even lattice of signature $(2,n)$ with $n\geq 3$ and $M^\vee$ be its dual lattice.  The Hermitian symmetric domain of type IV is defined as 
\begin{equation*}
\cD(M)=\{[\mathcal{Z}] \in  \PP(M\otimes \CC):  (\mathcal{Z}, \mathcal{Z})=0, (\mathcal{Z},\bar{\mathcal{Z}}) > 0\}^{+},
\end{equation*}
where the symbol $+$ means that we choose one of the two connected components. Let $\Orth^+ (M) < \Orth(M)$ be the subgroup preserving the component $\cD(M)$.

\begin{definition}
Let $\Gamma$ be a finite index subgroup of $\Orth^+ (M)$.  A modular form of weight $k\in \NN$ for $\Gamma$ is a holomorphic function on the affine cone 
$$
\mathcal{A}(M) = \{\mathcal{Z} \in M \otimes \CC: \; [\mathcal{Z}] \in \cD(M))\}
$$ 
of $\cD(M)$ satisfying
\begin{align*}
F(t\mathcal{Z})&=t^{-k}F(\mathcal{Z}), \quad \forall t \in \CC^*,\\
F(g\mathcal{Z})&=F(\mathcal{Z}), \quad \forall g\in \Gamma.
\end{align*}
\end{definition}

The space of modular forms of weight $k$ is a finite-dimensional complex vector space and we denote it by $M_k(\Gamma)$. We know from \cite{BB66} that the graded algebra
$$
M_*(\Gamma)=\bigoplus_{k=0}^\infty M_k(\Gamma)
$$
is finitely generated over $\CC$.  Moreover, the projective variety $\Proj (M_*(\Gamma))$ coincides with $(\cD(M)/\Gamma)^*$ which is the Satake-Baily-Borel compactification of $\cD(M)/\Gamma$. In particular, if $M_*(\Gamma)$ is a free algebra generated by $n+1$ forms of weights $k_1$, $k_2$, ..., $k_{n+1}$,  then $(\cD(M)/\Gamma)^*$ is a weighted projective space with weights $(k_1, k_2, ..., k_{n+1})$.

\subsection{Main results} \label{subsec:main result}
We focus on the particular case $M=\II_{2,2}\oplus L(-1)$, where $\II_{2,2}=\II_{1,1}\oplus\II_{1,1}$, $\II_{1,1}$ is a hyperbolic plane, and $L$ is an even positive definite lattice. Let $\widetilde{\Orth}^+(M)$ denote the discriminant kernel, which is the kernel of the natural homomorphism $\Orth^+ (M) \to \Orth(M^\vee/M)$.  

We assume that $\widetilde{\Orth}^+(M)<\Gamma$. At the standard one-dimensional cusp determined by $\II_{2,2}$, the symmetric space $\cD(M)$ can be realized as the tube domain 
$$
\cH(L)=\{Z=(\tau,\mathfrak{z},\omega)\in \HH\times (L\otimes\CC)\times \HH: 
(\im Z,\im Z)>0\}, 
$$
where $(\im Z,\im Z)=2\im \tau \im \omega - (\im \mathfrak{z},\im \mathfrak{z})_L$. We consider the Fourier expansion of orthogonal modular forms on the above tube domain. The Fourier-Jacobi coefficients are in fact holomorphic Jacobi forms associated to the lattice $L$ (see e.x. \cite{CG13} for the theory of Jacobi forms in many variables).  We notice that there is a nice structure result due to Wirthm\"{u}ller about Jacobi forms. Wirthm\"{u}ller \cite{Wir92} proved that the spaces of weak Jacobi forms invariant under the Weyl group are free algebras for all irreducible root systems except $E_8$. This type of Jacobi forms is called Weyl invariant Jacobi forms. Let $R$ be an irreducible root system of rank $r$ and not of type $E_8$. Let $L_R$ and $W(R)$ denote the generated root lattice and  the Weyl group. When $L_R$ is an odd lattice, we equip $L_R$ with the normalized bilinear form $2(\cdot,\cdot) $ rescaled by 2. The spaces of Weyl invariant weak Jacobi forms of type $R$ is freely generated by $r+1$ forms $\phi_j$ over the ring of $\SL_2(\ZZ)$ modular forms. We denote the weight and index of $\phi_j$ by $-k_j$ and $m_j$ for $1\leq j\leq r+1$.  In view of the relation between Jacobi forms and orthogonal modular forms linked by the Fourier-Jacobi expansion, we guess that there should be some connection between free algebras of Jacobi forms and orthogonal modular forms. Our main result is the following theorem which illuminates the mysterious connection. 

\begin{theorem}\label{th:main}
Let $R$ be a root system of type $A_r (1\leq r \leq 7)$, $B_r (2\leq r \leq 4)$, $D_r (4\leq r \leq 8)$, $C_r (3\leq r \leq 8)$, $G_2$, $F_4$, $E_6$, or $E_7$. We define $\Gamma_R<\Orth^+(\II_{2,2}\oplus L_R(-1))$ as the subgroup generated by $\widetilde{\Orth}^+(\II_{2,2}\oplus L_R(-1))$ and $W(R)$.  Then the graded algebra $M_*(\Gamma_R)$ is freely generated by $r+3$ forms of weights $4$, $6$, and $-k_j+12m_j$,  $1\leq j\leq r+1$.
\end{theorem}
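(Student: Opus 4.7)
The strategy is to produce $r+3$ explicit modular forms of the claimed weights, verify their algebraic independence, and show by an induction on the Fourier-Jacobi index that they generate $M_*(\Gamma_R)$. Two generators are the pullbacks of the classical Eisenstein series $E_4(\tau)$ and $E_6(\tau)$ to $\cH(L_R)$ via the projection $Z=(\tau,\mathfrak z,\omega)\mapsto \tau$, which are automatically $\Gamma_R$-invariant. For the remaining $r+1$ generators, Wirthm\"uller's theorem supplies, for each $j\in\{1,\dots,r+1\}$, a weak Weyl-invariant Jacobi form $\phi_j$ of weight $-k_j$ and index $m_j$. The product $\Delta^{m_j}\phi_j$ is a holomorphic Weyl-invariant Jacobi cusp form of weight $-k_j+12m_j$ and index $m_j$, since the order-$m_j$ vanishing of $\Delta^{m_j}$ at $\tau=i\infty$ absorbs the polar growth of $\phi_j$. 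Taking the additive (Gritsenko) lift gives
$$
G_j=\Grit(\Delta^{m_j}\phi_j)\in M_{-k_j+12m_j}(\Gamma_R),
$$
which is $W(R)$-invariant, hence $\Gamma_R$-invariant.

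For algebraic independence, the Fourier-Jacobi expansion at the $1$-dimensional cusp is injective by the Koecher principle, and the leading Fourier-Jacobi coefficient of $G_j$ recovers $\Delta^{m_j}\phi_j$ up to a nonzero scalar. Any nontrivial polynomial relation among $E_4,E_6,G_1,\ldots,G_{r+1}$ would thus descend, via leading terms, to a nontrivial polynomial relation among $E_4,E_6,\phi_1,\ldots,\phi_{r+1}$ in Wirthm\"uller's polynomial ring, which is impossible.

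For generation, let $A\subset M_*(\Gamma_R)$ be the subalgebra generated by the $r+3$ forms and let $F\in M_k(\Gamma_R)$. Its Fourier-Jacobi expansion $F=\sum_{n\ge 0}f_n(\tau,\mathfrak z)q_\omega^n$ has each $f_n$ a Weyl-invariant holomorphic Jacobi form of weight $k$ and index $n$. The constant term $f_0$ lies in $\CC[E_4,E_6]\subset A$, so subtract and assume $f_0=0$. For the smallest $N$ with $f_N\neq 0$, the quotient $f_N/\Delta^N$ is a weak Weyl-invariant Jacobi form of weight $k-12N$ and index $N$ (justified by the order of vanishing of $f_N$ at $\tau=i\infty$, forced by holomorphy of $F$ at the $0$-dimensional cusps), so by Wirthm\"uller it equals a polynomial $P(E_4,E_6,\phi_1,\ldots,\phi_{r+1})$. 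Replacing each $\phi_j$ by $G_j$ yields an element of $A$ whose $q_\omega^N$-coefficient matches $f_N$; subtracting from $F$ strictly raises the vanishing order. Applied weight-by-weight, a Hilbert-series comparison between the free polynomial algebra and $M_*(\Gamma_R)$ (both computable via this Fourier-Jacobi description) closes the argument.

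The hardest step is twofold. First, justifying the division $f_N/\Delta^N$ requires showing that the lowest Fourier-Jacobi coefficient of an orthogonal modular form vanishes to order at least $N$ at $\tau=i\infty$; this must be extracted from $\Gamma_R$-invariance and holomorphy at the $0$-dimensional cusps of $(\cD(M)/\Gamma_R)^*$, and is the key technical fact distinguishing the orthogonal-modular-form setting from an arbitrary sum of Jacobi forms. Second, termination of the inductive reduction has to be controlled in a way compatible with the Hilbert-series dimension count; this requires a case-by-case verification of Wirthm\"uller's generator data for each root system in the list (taking care of the $B_r$, $C_r$ cases, where $L_R$ is odd and is rescaled by $2$) to confirm that the weights $4,6,-k_j+12m_j$ really do yield a free polynomial Hilbert series matching that of $M_*(\Gamma_R)$.
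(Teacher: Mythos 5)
Your overall framework (kill Fourier--Jacobi coefficients inductively, using the symmetry of Fourier coefficients to justify dividing the $N$-th coefficient by $\Delta^N$, and deduce freeness from Wirthm\"uller's theorem) matches the paper's Proposition in \S\ref{sec:generators}. But there are two genuine gaps in the construction of the generators, the second of which is the central difficulty of the whole problem. First, a minor but real error: the function $E_4(\tau)$ pulled back via $(\tau,\mathfrak z,\omega)\mapsto\tau$ is \emph{not} $\Gamma_R$-invariant --- the orthogonal group mixes $\tau$, $\mathfrak z$ and $\omega$, and only a parabolic subgroup preserves that projection. The correct construction of $\widetilde E_4,\widetilde E_6$ is as additive lifts of the Jacobi--Eisenstein series of weights $4$ and $6$ and index $1$, whose first Fourier--Jacobi coefficients are $E_4$ and $E_6$.

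The serious gap is your claim that $G_j=\Grit(\Delta^{m_j}\phi_j)$ lies in $M_{-k_j+12m_j}(\Gamma_R)$. The additive (Gritsenko) lift of a holomorphic Jacobi form of index $t$ for $L$ is a modular form for the lattice $\II_{1,1}\oplus\II_{1,1}(t)\oplus L(-1)$, not for $\II_{2,2}\oplus L(-1)$; equivalently, its Fourier--Jacobi coefficients have indices $t,2t,3t,\dots$, so for $m_j>1$ it cannot be a modular form on $\Gamma_R<\Orth^+(\II_{2,2}\oplus L_R(-1))$. Your construction therefore only works when all Wirthm\"uller generators have index $1$ (the $A_n$ and $B_n$ cases) or when the higher-index generators happen to be polynomials in index-$1$ ones ($G_2$, $C_3$, $F_4$). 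For $C_n$, $D_n$ ($n\ge 4$), $E_6$ and $E_7$, where generators of index $2$, $3$ or $4$ are unavoidable, there is no direct lift producing the required $\Phi_j$. This is exactly where the paper has to work: it establishes the existence of $\Phi_j$ \emph{indirectly}, by showing that polynomials in the index-$1$ Eisenstein lifts already span spaces of the dimension predicted by the Jacobi-form upper bound of \S\ref{sec:upper bound} (verified by an explicit computation after pulling back to paramodular forms of degree two), and then propagates the result to the remaining lattices by pullback along lattice embeddings $D_7\subset D_8$, etc. Separately, note that your assertion that $\Delta^{m_j}\phi_j$ is automatically a holomorphic Jacobi form is false in general (the paper's closing remark points out that it fails for some index-$1$ generators of other root systems, which is precisely why the theorem is restricted to the listed $R$); it must be checked case by case.
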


The groups $\Gamma_R$ and the weights of generators are listed in Theorem \ref{th:generators}. We explain the proof of the theorem. Firstly, the Fourier-Jacobi coefficients of any modular form for $\Gamma_R$ are Weyl invariant holomorphic Jacobi forms of type $R$.  Suppose there exist the following $r+3$ modular forms for $\Gamma_R$:
\begin{itemize}
\item[(a)] Two modular forms $\widetilde{E}_4$ and $\widetilde{E}_6$ of weights 4 and 6 whose first Fourier--Jacobi coefficients are respectively the Eisenstein series $E_4$ and $E_6$ on $\SL_2(\ZZ)$;

\item[(b)] $r+1$ modular forms $\Phi_j\in M_{-k_j+12m_j}(\Gamma_R)$ whose first nonzero Fourier-Jacobi coefficients are $\Delta^{m_j}\phi_j$ in their $m_j^{\mathrm{th}}$ term, where $\Delta$ is the normalized cusp form of weight 12 on $\SL_2(\ZZ)$.
\end{itemize}
Then we can kill the first Fourier-Jacobi coefficients of a given modular form by a polynomial combination of the above $r+3$ functions. If the first $n$ Fourier-Jacobi coefficients of a modular form are zero (for a certain $n$ which depends on the structure of the ring of weak Jacobi forms) then it is identically zero. We use this to conclude that the space $M_*(\Gamma_R)$ is generated by the above $r+3$ functions and hence a free algebra.  We are able to construct the modular forms of the above form for all root systems listed in the theorem. This completes the proof.

Some cases of Theorem \ref{th:main} are already known. The $A_1$ case is Igusa's result. The $A_2$ and $B_2$ cases were first proved in \cite{DK03, DK06} in the context of Hermitian modular forms. The $B_3$ and $C_3$ cases were proved in \cite{FH00} and in \cite{Klo05} by different methods. The $B_4$ case was studied in \cite{Kri11}. The $B_n$-tower was also considered systematically in \cite{Woi18}. Krieg \cite{Kri05} worked out the $D_4$ case in the context of quaternionic modular forms. Making use of the interpretation of orthogonal modular varieties as moduli spaces of lattice polarized K3 surfaces, Vinberg determined the structure of orthogonal modular forms for the $D_n$-tower in \cite{Vin10, Vin18}, but Vinberg does not construct generators. In this paper we prove that the additive lifts of Jacobi Eisenstein series are generators in the most complicated cases $D_8$, $E_6$, and $E_7$. To the authors' knowledge the cases $A_4, A_5, A_6, A_7, E_6, E_7$ are new. 

As a direct consequence, we obtain the modularity of formal Fourier--Jacobi expansions for all cases in our theorem (see Corollary \ref{cor:modularity}). This property is useful to prove that certain arithmetic generating series are modular forms and is known to hold for Siegel modular forms \cite{BR15}. For orthogonal modular forms it was previously only known in the $A_1$ case (see \cite{Aok00}).

The layout of this paper is as follows. In \S \ref{sec:Jacobi forms} we introduce Weyl invariant Jacobi forms and recall Wirthm\"{u}ller's theorem. In \S \ref{sec:upper bound} we estimate the upper bound of the dimension of orthogonal modular forms using Jacobi forms. In \S \ref{sec:generators} we explain why the existence of modular forms of types (a) and (b) implies the above theorem. The existence of the desired modular forms is proved in \S \ref{sec:free algebras}, which completes the proof of our main theorem.

\section{Weyl invariant Jacobi forms}\label{sec:Jacobi forms}
In this section we introduce Weyl invariant Jacobi forms and Wirthm\"{u}ller's structure result. Let $R$ be an irreducible root system of rank $r$. The classification of $R$ is as follows  (see \cite{Bou60})
\begin{align*}
&A_n (n\geq 1),& &B_n (n\geq 2),& &C_n (n\geq 3),& &D_n (n\geq 4),& &E_6,& &E_7,& &E_8,& &G_2,& &F_4.&
\end{align*}
Let $L_R$ and $W(R)$ be the root lattice and Weyl group generated by $R$ respectively. When $L_R$ is an odd lattice, we rescale its bilinear form by $2$. Thus $L_R$ is always an even positive definite lattice and we denote its (rescaled) bilinear form by $\latt{\cdot, \cdot}$.  

Weyl invariant Jacobi forms are Jacobi forms of lattice index that are invariant under the action of the Weyl group of $R$ on the abelian variable. These Jacobi forms appear in the Fourier--Jacobi expansions of orthogonal modular forms. We refer to \cite{Gri94, CG13} for the theory.

\begin{definition}
Let $\varphi : \HH \times (L_R \otimes \CC) \rightarrow \CC$ be a holomorphic function and $k\in \ZZ$, $t\in \NN$. If $\varphi$ satisfies the following properties
\begin{align*}
&\varphi(\tau, \sigma(\mathfrak{z}))=\varphi(\tau, \mathfrak{z}), \quad \sigma\in W(R),\\
&\varphi (\tau, \mathfrak{z}+ x \tau + y)= \exp\left(-t\pi i [ \latt{x,x}\tau +2\latt{x,\mathfrak{z}} ]\right) \varphi ( \tau, \mathfrak{z} ), \quad x,y\in L_R,\\
&\varphi \left( \frac{a\tau +b}{c\tau + d},\frac{\mathfrak{z}}{c\tau + d} \right) = (c\tau + d)^k \exp\left( t\pi i \frac{c\latt{\mathfrak{z},\mathfrak{z}}}{c \tau + d}\right) \varphi ( \tau, \mathfrak{z} ), \quad \left( \begin{array}{cc}
a & b \\ 
c & d
\end{array} \right)   \in \SL_2(\ZZ),
\end{align*}
and if its Fourier expansion takes the form 
\begin{equation*}
\varphi ( \tau, \mathfrak{z} )= \sum_{ n=0}^{\infty}\sum_{ \ell \in L_R^\vee}f(n,\ell)e^{2\pi i (n\tau + \latt{\ell,\mathfrak{z}})},
\end{equation*}
then $\varphi$ is called a $W(R)$-invariant weak Jacobi form of weight $k$ and index $t$. If $f(n,\ell) = 0$ whenever $2nt - \latt{\ell,\ell} <0$, then $\varphi$ is called a $W(R)$-invariant holomorphic Jacobi form.  
We denote by $J^{\w ,W(R)}_{k,L_R,t}$ and  $ J^{W(R)}_{k,L_R,t} $ the vector spaces of $W(R)$-invariant weak and holomorphic  Jacobi forms of weight $k$ and index $t$, respectively.
\end{definition}

We introduce some notations appearing in Wirthm\"{u}ller's theorem.  The dual root system of $R$ is defined as 
\begin{equation*}
R^\vee=\{ r^\vee: r\in R \},
\end{equation*}
where $r^\vee=\frac{2}{(r,r)}r$ is the coroot of $r$.
Let $\widetilde{\alpha}$ denote the highest root of $R^\vee$. All $W(R)$-invariant weak Jacobi forms form a bigraded ring graded by weight and index
$$
J^{\w ,W(R)}_{*,L_R,*}=\bigoplus_{t\in \NN}\bigoplus_{k\in\ZZ} J^{\w ,W(R)}_{k,L_R,t}.
$$
Let $M_*(\SL_2(\ZZ))$ be the graded ring of modular forms for $\SL_2(\ZZ)$. In 1992,  Wirthm\"{u}ller  proved the following theorem.

\begin{theorem}[see Theorem 3.6 in \cite{Wir92}]
If $R\neq E_8$, then $J^{\w ,W(R)}_{*,L_R,*}$ over $M_*(\SL_2(\ZZ))$ is freely generated by $r+1$ $W(R)$-invariant weak Jacobi forms of weight $-k_j$ and index $m_j$
\begin{equation*}
\phi_{-k_j,R,m_j}(\tau,\mathfrak{z}), \quad 1\leq j\leq r+1.
\end{equation*}
Apart from $(k_1, m_1)=(0, 1)$, the indices $m_j$ are the coefficients of $\widetilde{\alpha}^\vee$  written as a linear combination of the simple roots of $R$. The integers $k_j$ are the degrees of the generators of the ring of $W(R)$-invariant polynomials, namely the exponents of the Weyl group $W(R)$ increased by $1$.
\end{theorem}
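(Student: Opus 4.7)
The plan is to reduce Wirthm\"{u}ller's theorem to a Chevalley-type statement on abelian varieties, following an approach originally due to Looijenga and Bernstein-Schwarzman. For fixed $\tau\in\HH$, a $W(R)$-invariant weak Jacobi form of index $t$ restricts to a $W(R)$-invariant meromorphic section of the $t$-th power of a natural theta line bundle on the complex torus $A_\tau=(L_R\otimes\CC)/(L_R+\tau L_R)$. The key geometric input is that the quotient $A_\tau/W(R)$ is a weighted projective space $\PP(m_1,\ldots,m_{r+1})$ with weights equal to $1$ together with the marks of $\widetilde{\alpha}^\vee$, and this holds precisely when $R\neq E_8$. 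This fiberwise description, varying with $\tau$, converts the ring of invariant Jacobi forms into a bigraded polynomial ring over $M_*(\SL_2(\ZZ))$.

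First I would construct the $r+1$ generators explicitly. The index-one, weight-zero generator is canonical and can be taken as the affine character of the basic representation of the untwisted affine Kac-Moody algebra attached to $R$ at level $1$, realized concretely as a quotient of theta series on $L_R$ by a power of the Dedekind $\eta$-function. For $j\geq 2$, I would produce $\phi_{-k_j,R,m_j}$ by applying a $W(R)$-invariant differential operator in $\mathfrak{z}$ of degree $k_j$, built from the $(j-1)$-th Chevalley generator of $\CC[L_R\otimes\CC]^{W(R)}$, to an index-$m_j$ theta series, choosing correction terms dictated by the heat operator so that the output lies in $J^{\w,W(R)}_{-k_j,L_R,m_j}$. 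Algebraic independence of the $\phi_{-k_j,R,m_j}$ over $M_*(\SL_2(\ZZ))$ can then be checked by specializing $\tau\to i\infty$ and invoking the classical Chevalley theorem that $\CC[L_R\otimes\CC]^{W(R)}$ is a polynomial ring in $r$ variables.

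Generation proceeds by induction on the index $t$: the Looijenga--Bernstein-Schwarzman description expresses any $W(R)$-invariant weak Jacobi form fiberwise as a polynomial in the $\phi_{-k_j,R,m_j}$, with $\tau$-dependent coefficients, and these coefficients are forced to be elements of $M_*(\SL_2(\ZZ))$ by the modular transformation law; subtracting off the leading contribution reduces to lower index. The principal obstacle is the Looijenga--Bernstein-Schwarzman identification of $A_\tau/W(R)$ with a weighted projective space, which is the technical heart of the theorem and the source of the $E_8$ exclusion: the highest coroot of $E_8$ has no mark equal to $1$, so there is no weight-$1$ coordinate, the quotient fails to be a weighted projective space, and consequently the associated ring of invariant Jacobi forms fails to be free. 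A secondary technical difficulty is ensuring that the differential constructions of the higher-index generators produce genuine weak Jacobi forms rather than forms with spurious poles, which requires careful bookkeeping with the heat equation relating $\partial_\tau$ and $\partial_{\mathfrak{z}}$ on theta series.
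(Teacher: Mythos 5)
The paper does not prove this statement at all; it is quoted directly from Wirthm\"uller \cite{Wir92}, so your proposal can only be compared with his published argument. Your general framework --- theta line bundles on $A_\tau$, the Looijenga--Bernstein--Schwarzman identification of $A_\tau/W(R)$ with a weighted projective space, fiberwise Chevalley-type generation, then control in the $\tau$-direction --- is the right circle of ideas and is essentially the route Wirthm\"uller follows.

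There is, however, a concrete error that undermines the argument as written: your explanation of the $E_8$ exclusion is false. Looijenga's theorem identifies $A_\tau/W(R)$ with a weighted projective space for \emph{every} irreducible root system, including $E_8$, where the quotient is $\PP(1,2,3,4,5,6,4,3,2)$; the weight-one coordinate always exists because it is attached to the affine node of the extended Dynkin diagram, not to a node of the finite diagram. Consequently the fiberwise picture you describe goes through verbatim for $E_8$, and your argument, were it complete, would show that $J^{\w,W(E_8)}_{*,E_8,*}$ is free --- which is false by \cite{Wan18}, cited in this very paper. This locates the real gap: the passage from fiberwise polynomial generation to generation over $M_*(\SL_2(\ZZ))$. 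The $\tau$-dependent coefficients you extract are a priori only holomorphic functions on $\HH$ transforming with some (possibly negative) weight; the transformation law alone does not place them in $M_*(\SL_2(\ZZ))$. One must control their behavior as $\tau\to i\infty$ using the Fourier-expansion condition defining weak Jacobi forms, and it is exactly this step --- in Wirthm\"uller's treatment an induction on the affine Dynkin diagram requiring a non-affine node of mark $1$, which $\widetilde{E}_8$ lacks --- that is the technical heart of the theorem and the true source of the $E_8$ exclusion. Since your proposal treats this step as a routine consequence of modularity, it both proves too much and omits the essential difficulty.
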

 
We formulate the weights and indices of these generators in Table \ref{tablewi}.  The generators for root systems of types $A_n$, $B_n$ and $D_4$ were constructed  in \cite{Ber99}. The construction of generators for root systems $E_6$ and $E_7$ was given in \cite{Sak19}. We refer to \cite{AG19} for the generators of type $D_n$ with $1\leq n \leq 8$.  We remark that it was proved in \cite{Wan18} that $J^{\w ,W(E_8)}_{*,E_8,*}$ is not a polynomial algebra.

\begin{table}[ht]
\caption{Weights and indices of generators of Weyl invariant weak Jacobi forms ($B_n: n\geq 2$, $C_n: n\geq 3$, $D_n: n\geq 4$)}\label{tablewi}
\renewcommand\arraystretch{1.5}
\noindent\[
\begin{array}{|c|c|c|c|}
\hline 
R & L(R) & W(R) & (k_j,m_j) \\ 
\hline 
A_n & A_n & W(A_n) & (0,1), (s,1) : 2\leq s\leq n+1\\ 
\hline 
B_n & nA_1 & \Orth(nA_1) & (2s,1) : 0\leq s \leq n  \\ 
\hline 
C_n & D_n & W(C_n) & (0,1), (2,1), (4,1), (2s,2) : 3\leq s \leq n  \\ 
\hline 
D_n & D_n & W(D_n) &  (0,1), (2,1), (4,1), (n,1), (2s,2) : 3\leq s \leq n-1 \\ 
\hline
E_6 & E_6 & W(E_6) & (0,1), (2,1), (5,1), (6,2), (8,2), (9,2), (12,
3)  \\ 
\hline
E_7 & E_7 & W(E_7) & (0,1), (2,1), (6,2), (8,2), (10,2), (12,
3), (14,3), (18,4)  \\ 
\hline
G_2 & A_2 & \Orth(A_2) & (0,1), (2,1), (6,2)  \\ 
\hline
F_4 & D_4 & \Orth(D_4) &  (0,1), (2,1), (6,2), (8,2), (12,3) \\ 
\hline
\end{array} 
\]
\end{table}

\section{An upper bound of the dimension of orthogonal modular forms}\label{sec:upper bound}
In \cite{Aok00}, Aoki estimated the dimension of Siegel modular forms of genus 2 using the theory of classical Jacobi forms due to Eichler and Zagier \cite{EZ85}. We extend his idea to the general case of orthogonal modular forms. We give an upper bound of the dimension of orthogonal modular forms in terms of the dimensions of Jacobi forms, which will be used later.

Let $M=\II_{2,2}\oplus L(-1)$ and $\Gamma=\latt{\widetilde{\Orth}^+(M), W}$, where $W$ is a subgroup of $\Orth(L)$ containing $\widetilde{\Orth}(L)$. Let $F$ be a modular form of weight $k$ with respect to $\Gamma$ with the trivial character. We consider its Fourier and Fourier-Jacobi expansions
\begin{align*}
F(\tau,\mathfrak{z},\omega)&=\sum_{\substack{n,m\in \NN, \ell\in L^\vee \\2nm-(\ell,\ell)\geq 0}}f(n,\ell,m)q^n\zeta^\ell\xi^m\\
&=\sum_{m=0}^{\infty}\phi_m(\tau,\mathfrak{z})\xi^m,
\end{align*}
where $q=\exp(2\pi i\tau)$, $\zeta^\ell=\exp(2\pi i (\ell, \mathfrak{z}))$, $\xi=\exp(2\pi i \omega)$.
Then $\phi_m\in J_{k,L,m}^{W}$, i.e. $\phi_m$ is a $W$-invariant holomorphic Jacobi form of weight $k$ and index $m$ associated to the lattice $L$. Moreover, we have the symmetric relation
$$
f(n,\ell,m)=f(m,\ell,n),\quad \forall (n,\ell,m)\in \NN \oplus L^\vee \oplus \NN. 
$$
 For $r\geq 1$, we define
$$
M_k(\Gamma)(\xi^r)=\{F\in M_k(\Gamma) : \phi_m=0,\; \text{for all $m<r$}\}
$$
and 
$$ 
J_{k,L,m}^{W}(q^r)=\{\phi \in J_{k,L,m}^{W}: \phi=O(q^r) \},
$$
here $\phi=O(q^r)$ means that its Fourier coefficients $f(n,\ell)=0$ for all $n< r$ and $\ell\in L^\vee$.  For convenience, we also set $M_k(\Gamma)(\xi^0)=M_k(\Gamma)$ and $J_{k,L,m}^{W}(q^0)=J_{k,L,m}^{W}$.

It is easy to check that the following sequence is exact:
$$ 
0\longrightarrow M_k(\Gamma)(\xi^{r+1})\longrightarrow  M_k(\Gamma)(\xi^r)\stackrel{P_r}\longrightarrow J_{k,L,r}^{W}(q^r),
$$
where the map $P_r$ sends $F$ to its Fourier--Jacobi coefficient $\phi_r$.
From this, we deduce the inequality
$$ 
\dim M_k(\Gamma)(\xi^{r})-\dim M_k(\Gamma)(\xi^{r+1}) \leq \dim J_{k,L,r}^{W}(q^r). 
$$
Since $\dim M_k(\Gamma) < \infty$, it follows that $ M_k(\Gamma)(\xi^{r})=\{ 0\} $ for sufficiently large $r$. If $\phi\in J_{k,L,m}^{W}(q^r)$, then $\phi / \Delta^r$ is a weak Jacobi form of weight $k-12r$ and index $m$. It follows that 
$$
\dim J_{k,L,m}^{W}(q^r) \leq \dim J_{k-12r,L,m}^{\w, W}.
$$
We then deduce
\begin{equation}\label{eq:MF-JF}
\dim M_k(\Gamma) \leq \sum_{r=0}^{\infty} \dim J_{k,L,r}^{W}(q^r)
\leq \sum_{r=0}^{\infty} \dim J_{k-12r,L,r}^{\w ,W}.
\end{equation}
In some particular cases, the last sum in \eqref{eq:MF-JF} is a finite sum. More precisely, if there exists a positive constant $\delta$ less than 12 such that the Jacobi forms associated to $L$ satisfy the condition
\begin{equation}
J_{k,L,m}^{\w,W}=\{0\} \quad \text{if} \quad k<-\delta m,
\end{equation}
then \eqref{eq:MF-JF} can be improved as follows: 
\begin{equation}
\dim M_k(\Gamma) \leq \sum_{r=0}^{\frac{k}{12-\delta}} \dim J_{k-12r,L,r}^{\w ,W}.
\end{equation}
It is enough to check this condition on a system of generators for $J^{\w ,W}_{*,L,*}$. For example,  by Table \ref{tablewi}, we can take $\delta = 5$ in the case of $W(E_7)$-invariant Jacobi forms.

\section{The shape of generators of free algebras}\label{sec:generators}
In this section, we introduce a universal and elementary method to prove that the space of orthogonal modular forms is a free algebra when the associated bigraded ring of Jacobi forms is free and there exist certain special modular forms (i.e. the modular forms of types (a) and (b) in section \ref{subsec:main result}).

Let $W$ be a subgroup of $\Orth(L)$ containing $\widetilde{\Orth}(L)$. We assume that the bigraded ring $J_{*,L,*}^{\w,W}$ is freely generated as a $M_*(\SL_2(\ZZ))$-algebra by the forms
$$
\phi_i:=\phi_{-k_i,m_i}\in J_{-k_i,L,m_i}^{\w,W}, \;1\leq i \leq \rank(L)+1.
$$
Let $M=\II_{2,2}\oplus L(-1)$ and define the congruence subgroup $\Gamma=\latt{\widetilde{\Orth}^+(M), W}$. We formulate the following assumption.

\begin{assumption}\label{assum}
There exist modular forms $\widetilde{E}_4$, $\widetilde{E}_6$ and $\Phi_i$, $1 \leq i \leq \rank(L)+1$ satisfying the following conditions:
\begin{enumerate}
\item $\widetilde{E}_4 \in M_4(\Gamma)$ and $\widetilde{E}_6\in M_6(\Gamma)$, and their first Fourier--Jacobi coefficients are respectively the Eisenstein series $E_4$ and $E_6$ on $\SL_2(\ZZ)$.
\item $\Phi_i\in M_{-k_i+12m_i}(\Gamma)$ has Fourier--Jacobi expansion $$\Phi_i = (\Delta^m \phi_i) \xi^{m_i} + O(\xi^{m_i+1}).$$
\end{enumerate}
\end{assumption}

\begin{proposition}
If Assumption \ref{assum} holds, then $M_*(\Gamma)$ is a free algebra over $\CC$ generated by $\widetilde{E}_4$, $\widetilde{E}_6$ and $\Phi_i$, $1\leq i \leq \rank(L)+1$.
\end{proposition}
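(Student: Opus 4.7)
My approach is to show that the natural evaluation map from the weighted polynomial ring $\CC[X, Y, Z_1, \ldots, Z_{\rank(L)+1}]$ (with $X \mapsto \widetilde{E}_4$, $Y \mapsto \widetilde{E}_6$, $Z_i \mapsto \Phi_i$, and weights assigned so the map is degree-preserving) onto $M_*(\Gamma)$ is both surjective and injective. Both halves are handled by extracting leading Fourier--Jacobi coefficients in the variable $\xi$ at the standard one-dimensional cusp.

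For surjectivity, let $F \in M_k(\Gamma)$ with Fourier--Jacobi expansion $F = \sum_m \phi_m \xi^m$, and let $m^*$ be the smallest index with $\phi_{m^*} \neq 0$. Since $\phi_{m^*}$ is a $W$-invariant holomorphic Jacobi form of weight $k$ and index $m^*$, the quotient $\phi_{m^*}/\Delta^{m^*}$ is a weak Jacobi form of weight $k-12m^*$ and index $m^*$. Wirthm\"uller's theorem writes it as
$$
\phi_{m^*}/\Delta^{m^*} \;=\; \sum_{\alpha:\ \sum_i \alpha_i m_i = m^*} c_\alpha(E_4, E_6) \prod_i \phi_i^{\alpha_i},
$$
with each $c_\alpha$ a quasi-homogeneous polynomial of weight $k - 12m^* + \sum_i \alpha_i k_i$. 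Substituting the lifts produces a candidate $P := \sum_\alpha c_\alpha(\widetilde{E}_4, \widetilde{E}_6) \prod_i \Phi_i^{\alpha_i}$, and a direct weight count shows $P \in M_k(\Gamma)$. Using $\Phi_i = \Delta^{m_i} \phi_i \, \xi^{m_i} + O(\xi^{m_i+1})$ together with $\widetilde{E}_4 \equiv E_4$ and $\widetilde{E}_6 \equiv E_6 \pmod{\xi}$, the $\xi^{m^*}$-coefficient of $P$ is exactly $\phi_{m^*}$. Hence $F - P$ lies in $M_k(\Gamma)(\xi^{m^*+1})$, and one iterates. The procedure terminates because any nonzero modular form must have some nonzero Fourier--Jacobi coefficient, so the descending chain $M_k(\Gamma) \supset M_k(\Gamma)(\xi) \supset M_k(\Gamma)(\xi^2) \supset \cdots$ of finite-dimensional spaces (cf.\ the exact sequence in Section \ref{sec:upper bound}) stabilizes at zero.

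For algebraic independence, suppose a nontrivial polynomial relation $\sum_{(a,b,\alpha)} c_{a,b,\alpha} \widetilde{E}_4^a \widetilde{E}_6^b \prod_i \Phi_i^{\alpha_i} = 0$ holds, and set $m^* := \min\{\sum_i \alpha_i m_i : c_{a,b,\alpha} \neq 0\}$. Extracting the $\xi^{m^*}$ Fourier--Jacobi coefficient yields
$$
\Delta^{m^*} \cdot \sum_{\substack{(a,b,\alpha) \\ \sum_i \alpha_i m_i = m^*}} c_{a,b,\alpha}\, E_4^a E_6^b \prod_i \phi_i^{\alpha_i} \;=\; 0
$$
in the ring of weak Jacobi forms. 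Since $J^{\w,W}_{*,L,*}$ is freely generated over $\CC[E_4, E_6] = M_*(\SL_2(\ZZ))$ by the $\phi_i$, every such coefficient $c_{a,b,\alpha}$ must vanish, contradicting the minimality of $m^*$.

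The main obstacle is the bookkeeping in the surjectivity step: one must verify that the Wirthm\"uller weights of the $c_\alpha(E_4, E_6)$ really add up with the weights $-k_i + 12 m_i$ of the $\Phi_i$ to produce a weight-$k$ modular form, and that the leading Fourier--Jacobi coefficient of the lift $P$ is $\phi_{m^*}$ itself (not just proportional to it). Once this bookkeeping is checked, both the termination of the inductive reduction and the algebraic independence are formal consequences of the finite-dimensionality of $M_k(\Gamma)$ and of Wirthm\"uller's freeness theorem.
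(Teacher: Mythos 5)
Your overall strategy --- inductively subtracting polynomial expressions in $\widetilde{E}_4$, $\widetilde{E}_6$, $\Phi_i$ to kill the leading Fourier--Jacobi coefficient, then deducing algebraic independence by extracting leading $\xi$-coefficients and invoking Wirthm\"uller's freeness theorem --- is the same as the paper's, and your weight bookkeeping, the identification of the $\xi^{m^*}$-coefficient of $P$ with $\phi_{m^*}$, the termination argument, and the independence argument are all correct. However, there is a genuine gap in the surjectivity step.

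You write: ``Since $\phi_{m^*}$ is a $W$-invariant holomorphic Jacobi form of weight $k$ and index $m^*$, the quotient $\phi_{m^*}/\Delta^{m^*}$ is a weak Jacobi form.'' This implication is false. A holomorphic Jacobi form of index $m^*$ only satisfies $f(n,\ell)=0$ when $2nm^*-\langle \ell,\ell\rangle<0$; its $q$-expansion may well begin at $q^0$ (the Jacobi Eisenstein series does), and dividing by $\Delta^{m^*}=q^{m^*}+\cdots$ then produces a Jacobi form with a pole at the cusp, which does not lie in Wirthm\"uller's free algebra of \emph{weak} Jacobi forms, so the decomposition you invoke is unavailable. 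What you actually need is $\phi_{m^*}=O(q^{m^*})$, and this does not follow from properties of $\phi_{m^*}$ in isolation: it follows from the fact that $\phi_{m^*}$ is the leading Fourier--Jacobi coefficient of a modular form $F$ whose earlier coefficients all vanish, combined with the symmetry $f(n,\ell,m)=f(m,\ell,n)$ of the Fourier coefficients of $F$ (induced by the isometry of $\II_{2,2}\oplus L(-1)$ exchanging the two hyperbolic planes, i.e.\ exchanging $\tau$ and $\omega$). Indeed, $\phi_m=0$ for all $m<m^*$ means $f(n,\ell,m)=0$ for $m<m^*$, hence by symmetry $f(n,\ell,m)=0$ for $n<m^*$, so $\phi_{m^*}\in J^{W}_{k,L,m^*}(q^{m^*})$ and the division by $\Delta^{m^*}$ is legitimate. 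This is precisely why the paper introduces the spaces $J^{W}_{k,L,m}(q^r)$ and records the symmetry relation before the exact sequence in \S\ref{sec:upper bound}, and why its induction carries along the statement $f_{r,r}\in J^{W}_{k,L,r}(q^r)$ at every step (noting that the symmetry persists for each $F_r$, since one only ever subtracts genuine modular forms). Once this observation is inserted, your argument closes and coincides with the paper's proof.
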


\begin{proof}
Let $F=\sum_{m=0}^{\infty} f_m \xi^m \in M_k(\Gamma)$. Then $f_0\in M_{k}(\SL_2(\ZZ))$ and there exists a polynomial $P_0$ over $\CC$ in two variables such that $f_0=P_0(E_4,E_6)$. It follows that
$$
F_1:=F-P_0(\widetilde{E}_4,\widetilde{E}_6)=\sum_{m=1}^{\infty} f_{m,1} \xi^m \in M_k(\Gamma)(\xi^1),
$$
and $f_{m,1}\in J_{k,L,m}^{W}(q)$ for $m\geq 1$. 

Now let $r\geq 1$ and suppose a form $F_r=\sum_{m=r}^{\infty} f_{m,r} \xi^m \in M_k(\Gamma)(\xi^r)$ is given. Since $f_{r,r}\in J_{k,L,r}^{W}(q^r)$, it follows that $f_{r,r}/ \Delta^r\in J_{k-12r,L,r}^{\w,W}$. Therefore there exists a polynomial $P_r$ over $\CC$ in $\rank(L)+3$ variables such that $f_{r,r}=\Delta^r P_r(E_4,E_6,\phi_i)$, which yields 
$$
F_{r+1}:=F_r-P_r(\widetilde{E}_4,\widetilde{E}_6,\Phi_i)=\sum_{m=r+1}^{\infty} f_{m,r+1} \xi^m \in M_k(\Gamma)(\xi^{r+1}),
$$
and $f_{m,r+1}\in J_{k,L,m}^W(q^{r+1})$ for $m\geq r+1$.
Indeed, let 
$$
\varphi=E_4^aE_6^b\prod_{i=1}^{\rank(L)+1} \phi_i^{c_i} 
$$
be any monomial in $P_r$. By considering the weight and index of $\varphi$, we find
$$
4a+6b-\sum_{i=1}^{\rank(L)+1} k_ic_i = k-12r, \quad \sum_{i=1}^{\rank(L)+1} m_ic_i = r.
$$
We conclude that the first nonzero Fourier-Jacobi coefficient of
$$
\widetilde{E}_4^a\widetilde{E}_6^b\prod_{i=1}^{\rank(L)+1} \Phi_i^{c_i} 
$$ 
is exactly  $\Delta^r \varphi$. Therefore the $r^{\text{th}}$  Fourier--Jacobi coefficient of $F_{r+1}$ is zero.

The proof follows by induction on $r$ because $M_k(\Gamma)(\xi^r)$ is trivial when $r$ is sufficiently large.  The algebraic independence of the $r+3$ generators over $\CC$ follows from the algebraic independence of the $r+1$ basic weak Jacobi forms over $M_*(\SL_2(\ZZ))$.
\end{proof}

From the above proof, it is easy to derive the following corollary.

\begin{corollary}\label{cor:structure}
Suppose that Assumption \ref{assum} holds. For any weak Jacobi form $\phi\in J_{k,L,m}^{\w,W}$, there exists a modular form of weight $k+12m$ for $\Gamma$ whose first nonzero Fourier-Jacobi coefficient is $(\Delta^m\phi)\cdot \xi^m$. Moreover, we have the equality
$$
\dim M_k(\Gamma) = \sum_{r=0}^{\infty} \dim J_{k-12r,L,r}^{\w ,W}.
$$
In particular, $J_{k-12r,L,r}^{\w ,W}=\{0\}$ for sufficiently large $r$.
\end{corollary}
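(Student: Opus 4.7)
The plan is to deduce the corollary directly from the argument used to prove the preceding proposition, by upgrading the one-sided inequality of Section \ref{sec:upper bound} to an equality.

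For the first assertion, I would take any $\phi\in J_{k,L,m}^{\w,W}$ and use the Wirthm\"uller-type freeness hypothesis to write
$$
\phi=\sum_{\alpha} c_\alpha\,E_4^{a_\alpha}E_6^{b_\alpha}\prod_{i=1}^{\rank(L)+1}\phi_i^{c_{i,\alpha}},
$$
where each monomial has weight $k$ and index $m$, so $\sum_i m_i c_{i,\alpha}=m$ and $4a_\alpha+6b_\alpha-\sum_i k_i c_{i,\alpha}=k$ for every $\alpha$. Exactly as in the proof of the proposition, the corresponding element
$$
\Phi:=\sum_{\alpha} c_\alpha\,\widetilde{E}_4^{a_\alpha}\widetilde{E}_6^{b_\alpha}\prod_{i=1}^{\rank(L)+1}\Phi_i^{c_{i,\alpha}}\in M_{k+12m}(\Gamma)
$$
has first nonzero Fourier--Jacobi coefficient $(\Delta^m\phi)\cdot\xi^m$, because each monomial in $\widetilde{E}_4,\widetilde{E}_6,\Phi_i$ contributes $\Delta^m$ times the analogous monomial in $E_4,E_6,\phi_i$ at the $\xi^m$-term.

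For the dimension equality, I would revisit the exact sequence from Section \ref{sec:upper bound},
$$
0\longrightarrow M_k(\Gamma)(\xi^{r+1})\longrightarrow M_k(\Gamma)(\xi^r)\stackrel{P_r}{\longrightarrow} J_{k,L,r}^{W}(q^r),
$$
and observe that the first assertion makes $P_r$ surjective. Indeed, any $\psi\in J_{k,L,r}^{W}(q^r)$ can be written as $\psi=\Delta^r\cdot(\psi/\Delta^r)$ with $\psi/\Delta^r\in J_{k-12r,L,r}^{\w,W}$; applying the first assertion to $\phi=\psi/\Delta^r$ produces a preimage in $M_k(\Gamma)(\xi^r)$. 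Consequently,
$$
\dim M_k(\Gamma)(\xi^r)-\dim M_k(\Gamma)(\xi^{r+1})=\dim J_{k,L,r}^{W}(q^r)=\dim J_{k-12r,L,r}^{\w,W}.
$$
Since $M_k(\Gamma)$ is finite-dimensional, the filtration $M_k(\Gamma)(\xi^r)$ vanishes for $r$ large; telescoping the previous equality then yields
$$
\dim M_k(\Gamma)=\sum_{r=0}^{\infty}\dim J_{k-12r,L,r}^{\w,W}.
$$
Finiteness of the left-hand side forces the sum on the right to be finite, so $J_{k-12r,L,r}^{\w,W}=\{0\}$ for $r\gg 0$.

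The only nontrivial step is the surjectivity of $P_r$, and that is precisely the content of the first assertion, which I expect to be a direct bookkeeping consequence of the proof of the preceding proposition. Everything else is formal manipulation of a finite filtration, so there is no real obstacle once Assumption \ref{assum} and the freeness of $J^{\w,W}_{*,L,*}$ over $M_*(\SL_2(\ZZ))$ are in hand.
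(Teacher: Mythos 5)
Your argument is correct and is essentially the one the paper intends (the paper gives no separate proof, only the remark that the corollary follows from the proof of the proposition): lift a polynomial expression for $\phi$ in the free generators $E_4,E_6,\phi_i$ to the corresponding polynomial in $\widetilde{E}_4,\widetilde{E}_6,\Phi_i$, and use the resulting surjectivity of $P_r$ to turn the inequalities of \S\ref{sec:upper bound} into equalities. The one step you assert without justification is $\dim J_{k,L,r}^{W}(q^r)=\dim J_{k-12r,L,r}^{\w,W}$: the inequality $\le$ is division by $\Delta^r$ as in \S\ref{sec:upper bound}, but the reverse is not automatic, since for a weak Jacobi form $\varphi$ of higher index the product $\Delta^r\varphi$ need not a priori satisfy the holomorphy condition $2nr-\latt{\ell,\ell}\ge 0$; it does here because $\Delta^r\varphi$ is realized as the $r$-th Fourier--Jacobi coefficient of the modular form produced by your first assertion, and Fourier--Jacobi coefficients of orthogonal modular forms are holomorphic Jacobi forms. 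With that one sentence added, the proof is complete.
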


Let $k$ be a positive integer. A formal series of holomorphic  Jacobi forms is an element
$$
\Psi(Z)=\sum_{m=0}^{\infty} \psi_m \xi^m \in \prod_{m=0}^\infty J_{k,L,m}^{W}.
$$
We call $\Psi$ a formal Fourier-Jacobi expansion of weight $k$ if it satisfies 
$$
f_m(n,\ell)=f_n(m,\ell), \quad m,n\in \NN, \ell \in L^\vee,
$$
where $f_m(n,\ell)$ are Fourier coefficients of $\psi_m$. We denote the space of such expansions by $FM_k(\Gamma)$.  As a direct consequence, we obtain the modularity of formal Fourier-Jacobi expansions:

\begin{corollary}\label{cor:modularity}
Suppose that Assumption \ref{assum} holds. Then $FM_k(\Gamma)=M_k(\Gamma)$ for any $k\in \NN$. In other word, every formal Fourier-Jacobi expansion is convergent on the tube domain $\cH(L)$ and defines an orthogonal modular form.
\end{corollary}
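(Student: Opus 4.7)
The content is the inclusion $FM_k(\Gamma) \subseteq M_k(\Gamma)$, since the reverse inclusion is immediate. The plan is to mimic the proof of the preceding proposition, killing Fourier--Jacobi coefficients one at a time by subtracting genuine modular forms produced by Corollary \ref{cor:structure}, and then to verify that the process terminates thanks to the vanishing also provided by that corollary.

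Given $\Psi = \sum_{m \ge 0} \psi_m \xi^m \in FM_k(\Gamma)$, I would inductively construct modular forms $G_r \in M_k(\Gamma)$ and partial remainders
\[
\Psi_r := \Psi - \sum_{s < r} G_s = \sum_{m \ge r} \psi_m^{(r)} \xi^m \in FM_k(\Gamma),
\]
with $\psi_m^{(r)} = 0$ for all $m < r$. The key observation is that the symmetry $f_m^{(r)}(n,\ell) = f_n^{(r)}(m,\ell)$ of a formal Fourier--Jacobi expansion, combined with $\psi_m^{(r)} = 0$ for $m<r$, immediately forces $\psi_n^{(r)} = O(q^r)$ for every $n \ge r$. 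In particular $\psi_r^{(r)}/\Delta^r \in J^{\w,W}_{k-12r,L,r}$, so by Corollary \ref{cor:structure} there exists $G_r \in M_k(\Gamma)$ whose first nonzero Fourier--Jacobi coefficient is $\psi_r^{(r)}\xi^r$. Setting $\Psi_{r+1} := \Psi_r - G_r$ preserves the inductive condition, since $M_k(\Gamma) \subseteq FM_k(\Gamma)$ and the $\xi^r$ coefficient of $\Psi_{r+1}$ is zero.

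For termination, Corollary \ref{cor:structure} also guarantees an integer $R$ with $J^{\w,W}_{k-12r,L,r} = \{0\}$ for all $r \ge R$. For such $r$ we must have $\psi_r^{(r)} = 0$ and may take $G_r = 0$, so $F := \sum_{r \ge 0} G_r$ is a finite sum and defines an element of $M_k(\Gamma)$. It remains to show $\Psi - F = \Psi_R = 0$. I would prove by strong induction on $N \ge R$ that $\psi_N^{(R)} = 0$: assuming $\psi_m^{(R)} = 0$ for all $m < N$, the same symmetry argument upgrades $\psi_N^{(R)}$ to $O(q^N)$, hence $\psi_N^{(R)}/\Delta^N \in J^{\w,W}_{k-12N,L,N} = \{0\}$ and thus $\psi_N^{(R)} = 0$. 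Combined with the base case $\psi_m^{(R)} = 0$ for $m<R$, this yields $\Psi_R = 0$ and so $\Psi = F \in M_k(\Gamma)$.

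There is no substantial obstacle: the argument is a direct extension of the proof of the preceding proposition, and the only genuinely new input is the observation that the symmetry of formal Fourier--Jacobi expansions automatically supplies the $q^r$-divisibility needed to divide by $\Delta^r$ at each stage. This is precisely the hypothesis that lets Corollary \ref{cor:structure} produce a correcting modular form, and that forces termination once $J^{\w,W}_{k-12r,L,r}$ vanishes.
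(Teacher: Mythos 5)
Your proof is correct and is in substance the paper's own argument: the authors establish the result by combining the injection $M_k(\Gamma)\hookrightarrow FM_k(\Gamma)$ with the bound $\dim FM_k(\Gamma)\le\sum_{r}\dim J^{\w,W}_{k-12r,L,r}$ (obtained by exactly the successive-elimination procedure you carry out) and the surjectivity supplied by Corollary \ref{cor:structure}. Your version merely makes explicit the two points the paper leaves implicit in ``a similar argument as in the previous subsection'' --- that the symmetry $f_m(n,\ell)=f_n(m,\ell)$ forces the $O(q^r)$ vanishing needed to divide by $\Delta^r$, and that the vanishing of $J^{\w,W}_{k-12r,L,r}$ for large $r$ guarantees termination --- and replaces the dimension count by a direct construction of the preimage.
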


\begin{proof}
Firstly, the Fourier-Jacobi expansion of modular forms gives the following injective map
$$
M_k(\Gamma) \to FM_k(\Gamma), \quad F\mapsto \text{Fourier-Jacobi expansion of $F$}. 
$$
Using a similar argument as in the previous subsection, we get $\dim FM_k(\Gamma) \leq \sum_{r=0}^{\infty} \dim J_{k-12r,L,r}^{\w ,W}$.   We then prove the surjectivity of the above map by Corollary \ref{cor:structure}.
\end{proof}

We remark that the modularity of formal Fourier-Jacobi expansions is known for Siegel modular forms \cite{BR15}. The result of \cite{BR15} is a key step in the proof that certain generating series from arithmetic geometry are in fact modular forms.

\section{Free algebras related to root systems}\label{sec:free algebras}
In this section we prove our main theorem. In fact, we show that all cases of Theorem \ref{th:main} satisfy Assumption \ref{assum}.  We use the additive (Gritsenko) lift, i.e. the generalization of the Saito--Kurokawa lift to higher-rank lattices, to construct orthogonal modular forms. This sends a holomorphic Jacobi form $\phi$ of weight $k$ and index 1 associated to a lattice $L$ to a modular form of weight $k$ on $\widetilde{\Orth}^+(\II_{2,2}\oplus L(-1))$. If the constant term $f(0,0)$ of $\phi$ is not zero, then the first Fourier--Jacobi coefficient of the additive lift of $\phi$ is the $\SL_2(\ZZ)$-Eisenstein series of weight $k$. If $f(0,0)=0$, then the first nonzero Fourier--Jacobi coefficient of the additive lift is $\phi$ itself. We refer to \cite[Theorem 3.1]{Gri94} (or \cite[Theorem 3.2]{CG13}) for more details of additive lifts.

\subsection{The cases \texorpdfstring{$A_n$}{An}, \texorpdfstring{$B_n$}{Bn}, \texorpdfstring{$G_2$}{G2} and \texorpdfstring{$C_3$}{C3}}
We first consider root systems $A_n$ and $B_n$. In these cases, all generators of Weyl invariant Jacobi forms have index 1. Therefore it is easy to construct the desired orthogonal modular forms in Assumption \ref{assum}.

\begin{enumerate}
\item Let $R=B_n$ with $2\leq n\leq 4$.  The generated root lattice is the odd lattice $\ZZ^n$. After rescaling the bilinear form by 2,  $L_R$ becomes the root lattice $nA_1$.  

It is clear that $W(B_n)=\Orth(nA_1)$. The natural homomorphism $\Orth(nA_1)\to \Orth(nA_1^\vee / nA_1)$ is surjective. Recall that $\widetilde{\Orth}^+(\II_{2,2}\oplus nA_1)$ is the kernel of the surjective homomorphism $\Orth^+(\II_{2,2}\oplus nA_1)\to \Orth(nA_1^\vee / nA_1)$. Thus $\Orth^+(\II_{2,2}\oplus nA_1)$ is generated by $\widetilde{\Orth}^+(\II_{2,2}\oplus nA_1)$ and $\Orth(nA_1)$, which yields $\Gamma_R=\Orth^+(\II_{2,2}\oplus nA_1(-1))$.  This argument holds for all root lattices in Theorem \ref{th:main}. This helps us to calculate the groups $\Gamma_R$.

The Fourier--Jacobi coefficients of modular forms on $\Gamma_R$ are $W(B_n)$-invariant Jacobi forms. We know from Table \ref{tablewi} that $J_{*,nA_1,*}^{\w,\Orth(nA_1)}$ is a polynomial algebra over $M_*(\SL_2(\ZZ))$ and all generators $\phi_i$ have index $1$. The additive lifts of the Jacobi Eisenstein series of weights $4$ and $6$ give the modular forms $\widetilde{E_4}$ and $\widetilde{E_6}$. It is easy to check that $\Delta\phi_i$ are always holomorphic  Jacobi forms. The additive lifts of $\Delta\phi_i$ give the modular forms $\Phi_i$. Thus Assumption \ref{assum} is satisfied. 

\item Let $R=A_n$ with $1\leq n\leq 7$.  In these cases, $\Gamma_R=\widetilde{\Orth}^+(\II_{2,2}\oplus A_n(-1))$ because $W(A_n)$ is contained in the discriminant kernel $\widetilde{\Orth}(A_n)$. We note here that $\widetilde{\Orth}^+(\II_{2,2}\oplus A_1(-1))=\Orth^+(\II_{2,2}\oplus A_1(-1))$. It is known that $J_{*,A_n,*}^{\w,W(A_n)}$ is a polynomial algebra over $M_*(\SL_2(\ZZ))$ and all generators have index $1$. Similarly, we conclude that Assumption \ref{assum} holds. 

\item Let $R=G_2$. Then $L_R=A_2$, $W(R)=\Orth(A_2)$, and $\Gamma_R=\Orth^+(\II_{2,2}\oplus A_2(-1))$. The Fourier--Jacobi coefficients of modular forms on $\Orth^+(\II_{2,2}\oplus A_2(-1))$ are $W(G_2)$-invariant Jacobi forms. We know that $J_{*,A_2,*}^{\w,\Orth(A_2)}$ is a polynomial algebra over $M_*(\SL_2(\ZZ))$ and the generators are $\phi_{0,A_2,1}$, $\phi_{-2,A_2,1}$ and $\phi_{-6,A_2,2}$. We notice that $\phi_{-6,A_2,2}$ can be chosen as $\phi_{-3,A_2,1}^2$, where $\phi_{-3,A_2,1}$ is a generator of $J_{*,A_2,*}^{\w,W(A_2)}$. Thus the desired modular form $\Phi_3$ corresponding to $\phi_{-6,A_2,2}$ can be constructed as the square of the additive lift of $\Delta\phi_{-3,A_2,1}$. Therefore Assumption \ref{assum} is satisfied.

\item Let $R=C_3$. Then $L_R=A_3\cong D_3$, $W(R)=\Orth(A_3)$, and $\Gamma_R=\Orth^+(\II_{2,2}\oplus A_3(-1))$. We know that $J_{*,A_3,*}^{\w,\Orth(A_3)}$ is a polynomial algebra over $M_*(\SL_2(\ZZ))$ generated by $\phi_{0,A_3,1}$, $\phi_{-2,A_3,1}$, $\phi_{-6,A_3,2}$ and $\phi_{-4,A_3,1}$. Similarly, $\phi_{-6,A_3,2}$ can be chosen as $\phi_{-3,A_3,1}^2$, where $\phi_{-3,A_3,1}$ is a generator of $J_{*,A_3,*}^{\w,W(A_3)}$. In a similar way, we prove that Assumption \ref{assum} holds.
\end{enumerate}

\subsection{The cases \texorpdfstring{$C_n$}{Cn}, \texorpdfstring{$D_n$}{Dn},  and \texorpdfstring{$F_4$}{F4}}\label{subsec:D}
Let $4\leq n\leq 8$. For $R=C_n$ or $D_n$, we have $L_R=D_n$. When $R=F_4$, we have $L_R=D_4$. It is known that $W(D_n)$ is a subgroup of index 2 of $W(C_n)$ and that $W(C_n)/W(D_n)$ is generated by the sign change for an odd number of coordinates of $D_n\otimes \CC$. It is easy to check that  
\begin{align*}
&\Gamma_{D_n}=\widetilde{\Orth}^+(\II_{2,2}\oplus D_n(-1)), \quad 4\leq n \leq 8,\\
&\Gamma_{C_n}=\Orth^+(\II_{2,2}\oplus D_n(-1)), \quad 5\leq n \leq 8,\\
&\Gamma_{C_4}=\Orth_1^+(\II_{2,2}\oplus D_4(-1)):=\latt{\widetilde{\Orth}^+(\II_{2,2}\oplus D_4(-1)), \text{odd sign change of coordinates}},\\
&\Gamma_{F_4}=\Orth^+(\II_{2,2}\oplus D_4(-1)).
\end{align*}

In these cases, the rings of Weyl-invariant Jacobi forms require generators of index larger than 1. There is no direct way to construct the desired modular forms in Assumption \ref{assum} corresponding  to basic Jacobi forms of index larger than 1. To overcome this,  we first construct the desired generators for the root system $C_8$ and then use the pullback trick to construct the desired generators for other $C_n$, $D_n$ and $F_4$. In order to construct the desired generators for $C_8$, we first prove that the exact dimension of the space of orthogonal modular forms coincides with the upper bound given in \S \ref{sec:upper bound} when the weight is low.  To prove this, we construct some linearly independent modular forms as polynomials in the additive lifts of Jacobi Eisenstein series, which gives a lower bound for the dimension.  We then conclude the existence of the desired generators using an idea similar to Corollary \ref{cor:structure}. 

\subsubsection{The case \texorpdfstring{$C_8$}{C8}}
We recall that $W(C_8)=\Orth(D_8)$ and $\Gamma_{C_8}=\Orth^+(\II_{2,2}\oplus D_8(-1))$. There is a well-known isomorphism between the spaces of Jacobi forms and vector-valued modular forms for the Weil representation of $\SL_2(\ZZ)$, which is given by the theta decomposition (see \cite[Lemma 2.3]{Gri94}). This isomorphism will be used very often later.  We see from the theta decomposition that every Jacobi form of index 1 for $D_8$ is automatically invariant under $W(D_8)$, namely $J_{k,D_8,1}=J_{k,D_8,1}^{W(D_8)}$. The additive lift of any $\Orth(D_8)$-invariant holomorphic Jacobi forms of index 1 for $D_8$ will be a modular form on $\Gamma_{C_8}$. 

The Fourier expansions of $\Orth(D_8)$-invariant Jacobi Eisenstein series of lattice index $D_8$ (i.e. index 1 for $D_8$) are straightforward to compute as follows. The root lattice $D_8$ is stably equivalent to the rescaled hyperbolic plane $\mathrm{II}_{1,1}(2)$ in the sense that there are unimodular lattices $U_1,U_2$ for which $U_1 \oplus D_8 \cong U_2 \oplus \mathrm{II}_{1,1}(2)$, so there is an isomorphism of graded $M_*(\mathrm{SL}_2(\mathbb{Z}))$-modules $$J_{*,D_8,1} \cong M_{*-4}(\rho)$$ where $M_*(\rho)$ denotes modular forms which transform with respect to the (dual) Weil representation attached to $\mathrm{II}_{1,1}(2)$. We write such modular forms in terms of their components: $$F = (F_{00}, F_{01}, F_{10}, F_{11}).$$ By a general result for Weil representations attached to rescaled hyperbolic planes (Proposition 3.4 of \cite{Car12}) there is an isomorphism $$M_*(\rho) \cong M_*(\mathrm{SL}_2(\mathbb{Z})) \oplus M_*(\Gamma_0(2)),$$ under which a pair of modular forms $f_1 \in M_k(\mathrm{SL}_2(\mathbb{Z}))$, $f_2 \in M_k(\Gamma_0(2))$ corresponds to the form $$F_{00} = (f_1 + f_2)/2, \; F_{01} = (f_1 - f_2)/2, \; F_{10} = (f_2|_k S + f_2 |_k U)/2, \; F_{11} = (F_2|_k S - F_2|_k U)/2,$$ where $S = \begin{psmallmatrix} 0 & -1 \\ 1 & 0 \end{psmallmatrix}$ and $U = \begin{psmallmatrix} 0 & -1 \\ 1 & 1 \end{psmallmatrix}$.

A Jacobi form in $J_{k,D_8,1}$ is $\Orth(D_8)$-invariant if and only if the corresponding modular form $F$ has equal components $F_{01} = F_{10}$, and this is true if and only if the forms $(f_1,f_2)$ as above satisfy $$f_1 = f_2 + f_2|_k S + f_2|_k U.$$ In this way we obtain a natural isomorphism of graded $M_*(\SL_2(\ZZ))$-modules $$J_{*,D_8,1}^{\Orth(D_8)} \cong M_{*-4}(\Gamma_0(2))$$ which respects Fourier coefficients (on the right-hand side involving both cusps) and identifies the two $\Orth(D_8)$-invariant Jacobi Eisenstein series of weights no less than 8 with the usual Eisenstein series for $\Gamma_0(2)$.

Unfortunately, the lifts to orthogonal modular forms have unwieldy Fourier expansions (in ten variables) and taking algebraic expressions in them to any significant precision involves an unmanageable number of operations. It is inconvenient to work with these modular forms directly. Our method is to instead restrict these lifts to certain embedded Siegel upper half-spaces which correspond to Grassmannians of sublattices of $\II_{2,2} \oplus D_8(-1)$ of signature $(2,3)$. These restrictions (called \emph{pullbacks}) can be computed without first computing the lifts due to the commutative diagrams

\begin{center} \begin{tikzpicture}[node distance=2.5cm, auto]
\node (A) {$J_{k,D_8,1}^{\Orth(D_8)}$};
\node (AA) [right of=A] {$ $};
\node (B) [right of=AA] {$J_{k,m}$};
\node (C) [below of=A] {$M_k(\Gamma_{C_8})$};
\node (D) [below of=B] {$M_k(K(m))$};
\draw[->] (B) to node {$\mathrm{lift}$} (D);
\draw[->] (A) to node {$\mathrm{lift}$} (C);
\draw[->] (A) to node {$e_v$} (B);
\draw[->] (C) to node {$\mathrm{pullback}$} (D);
\end{tikzpicture}
\end{center}
where $v \in D_8$ is any nonzero lattice vector of norm $m$, and $e_v$ denotes the map $$e_v : J_{k,D_8,1} \longrightarrow J_{k,m}, \; \; (e_v f)(\tau,z) = f(\tau,z \cdot v),$$ and where $K(m)$ is the paramodular group of degree two and level $m$, which is isomorphic to the group $\widetilde{\SO}^+(\II_{2,2}\oplus \latt{-2m})$. Any algebraic relation among the orthogonal Eisenstein series for $D_8$ will be mapped to an algebraic relation among the pullbacks. By making a fortunate choice of $v$, we hope to find that the pullbacks satisfy no relations in small weight and conclude that the orthogonal Eisenstein series span the whole spaces of modular forms of small weights.

The orthogonal modular variety attached to $\Orth^+(\II_{2,2} \oplus D_8(-1))$ has two zero-dimensional cusps, corresponding to the cosets of $D_8$ in its dual lattice which have integer norm modulo the orthogonal group. In particular, for every even $k \ge 8$, we obtain two orthogonal Eisenstein series $\cE_{k,0}$ and $\cE_{k,1}$ (the additive lifts of the Jacobi Eisenstein series) corresponding to the cusps of the zero and nonzero integer-norm cosets of $D_8$, respectively. In addition, we let $\cE_4$ and $\cE_6$ be the additive lifts of the holomorphic Jacobi forms which correspond to $1 \in M_0(\Gamma_0(2))$ and $2 E_2(\tau) - E_2(2\tau) \in M_2(\Gamma_0(2))$ as above.

\begin{proposition} 
The series $$\cE_4,\cE_6,\cE_{8,0},\cE_{8,1},\cE_{10,0},\cE_{10,1},\cE_{12,0},\cE_{12,1},\cE_{14,0},\cE_{16,0},\cE_{18,0}$$ satisfy no algebraic relations in weights less than $20$.
\end{proposition}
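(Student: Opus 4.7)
The strategy is to reduce the question of algebraic independence among the eleven orthogonal Eisenstein series to a finite-dimensional linear independence check among their pullbacks to paramodular Siegel modular forms, where Fourier expansions are computationally accessible. Concretely, I would pick a primitive vector $v \in D_8$ of small norm $m = \latt{v,v}$, for instance a root giving $m = 2$, and exploit the ring homomorphism $\mathrm{pullback}_v \colon M_*(\Gamma_{C_8}) \to M_*(K(m))$ provided by the commutative diagram in the excerpt. Because $\mathrm{pullback}_v$ is a ring map, any polynomial relation of total weight $w < 20$ among the $\cE$'s descends to a relation of the same shape among their images; hence it is enough to establish that no such nontrivial relation holds among the pullbacks. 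In each weight $w \in \{4, 6, 8, 10, 12, 14, 16, 18\}$ the set of monomials of that weight in eleven variables is finite, so this is a concrete linear-algebra problem.

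To carry this out I would first translate each $\Orth(D_8)$-invariant Jacobi Eisenstein series into an explicit element of $M_{*-4}(\Gamma_0(2))$ via the isomorphism $J_{*,D_8,1}^{\Orth(D_8)} \cong M_{*-4}(\Gamma_0(2))$ established earlier, recover its Fourier expansion as a lattice Jacobi form, and read off the scalar Jacobi form $e_v(\phi) \in J_{*,m}$ by evaluating the abelian variable on $v$. Second, I would compute the paramodular additive lift of each $e_v(\phi)$ to $M_*(K(m))$ via the standard Saito--Kurokawa-type closed form, yielding explicit Fourier expansions indexed by half-integral symmetric $2 \times 2$ matrices. Third, for each weight $w$ above, I would enumerate the monomials of total weight $w$ in the eleven pullbacks, stack their Fourier coefficients into a matrix, and verify by elementary linear algebra that this matrix has full rank after computing coefficients out to a precision meeting a Sturm-type bound for $K(m)$.

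The principal obstacle lies in the joint choice of $v$ and the required precision. A poorly chosen $v$, for example one lying in a highly symmetric sublattice of $D_8$, may destroy enough information to introduce spurious dependencies among the pullbacks, forcing a switch to a different vector or paramodular level. On the other hand, separating all monomials in the highest weight $w = 18$ involves a moderately large but entirely finite linear-algebra verification, best carried out with computer assistance. Once $v$ is fixed and the rank condition is confirmed in each of the eight relevant weights, the algebraic independence of the eleven Eisenstein series $\cE_4, \cE_6, \cE_{8,0}, \cE_{8,1}, \cE_{10,0}, \cE_{10,1}, \cE_{12,0}, \cE_{12,1}, \cE_{14,0}, \cE_{16,0}, \cE_{18,0}$ in weights less than $20$ follows immediately.
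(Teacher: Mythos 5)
Your proposal is essentially the paper's own argument: pull back along a vector $v \in D_8$ to a paramodular group $K(\latt{v,v})$ via the commutative diagram, and verify by a finite linear-algebra computation that the monomials of weight below $20$ in the lifted pullbacks are linearly independent. The one substantive caveat is your suggested choice of a root ($m=2$): the spaces $M_k(K(2))$ are far too small for this to work (already $\dim M_8(K(2))=2$, so $\cE_4^2$ and the pullbacks of $\cE_{8,0},\cE_{8,1}$ would be forced into a spurious linear relation) --- this is exactly the contingency you anticipate, and the paper resolves it by taking $v=(4,2,3,4,1,3,2,4)$ of norm $24$.
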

\begin{proof} We realized the $D_8$ lattice concretely as $\ZZ^8$ with bilinear form given by the Cartan matrix $$\begin{psmallmatrix} 2 & -1 & 0 & 0 & 0 & 0 & 0 & 0 \\ -1 & 2 &-1 & 0 & 0 & 0 & 0 & 0 \\ 0 & -1 &2 & -1 & 0 & 0 & 0 & 0 \\ 0 & 0  & -1 & 2 & -1 & 0 & 0 & 0 \\ 0 & 0 & 0 & -1 & 2 & -1 & 0 & 0 \\ 0 & 0 &0  & 0 & -1 & 2 & - 1 & -1 \\ 0 & 0 & 0 & 0 & 0 & -1 & 2 & 0 \\ 0 & 0 & 0 & 0 & 0 & -1 & 0 & 2 \end{psmallmatrix},$$ and we restricted the Jacobi Eisenstein series to the complex span of the vector $$v = (4,2,3,4,1,3,2,4)$$ of norm $24$. (The choice of $v$ is more or less arbitrary; roughly speaking, with larger-norm vectors the computation is more likely to succeed and with smaller-norm vectors the computation is faster, and we look for a balance between the two.) We computed enough Fourier coefficients of the pulled-back Jacobi Eisenstein series $f(\tau,z) = E_{*,*}(\tau,z)$ to identify them uniquely, and used faster methods for Jacobi forms of scalar index to compute their Gritsenko lifts to the paramodular group $K(24)$. Finally we checked by direct computation that the monomials in these lifts are linearly independent in weights up to $20$.

It is important to verify the correctness of this computation: incorrect computations at any step in the above argument are very likely to produce series which are algebraically independent, regardless of whether this is actually the case. To verify that the algorithm is correct, we computed not only the Eisenstein series above, but all Eisenstein series (for both cusps) of weights up to and including $20$. We expect, then, to find unique expressions for these additional Eisenstein series in terms of the generators above. Indeed, our method yields unique expressions for them; for example, we find \[\cE_{14,0} + \cE_{14,1} = 1330560 \cE_4^2 \cE_6 + 2640 \cE_4 (E_{10,0} + \cE_{10,1}) - 11088 \cE_6 (\cE_{8,0} + \cE_{8,1}). \qedhere \]
\end{proof}

From the above proposition, we derive a lower bound of $\dim M_k(\Gamma_{C_8})$ for $k\leq 18$. By direct calculation, we find that this lower bound coincides with the upper bound given in \S \ref{sec:upper bound}. As in Corollary \ref{cor:structure}, when $k\leq 18$, we have 
$$
\dim M_k(\Gamma_{C_8}) = \sum_{r=0}^{\infty} \dim J_{k-12r,D_8,r}^{\w ,\Orth(D_8)}, \quad k\leq 18.
$$
Moreover, for any $\phi\in J_{k,D_8,m}^{\Orth(D_8)}(q^m)$ with $k\leq 18$, there exists a modular form of weight $k$ on $\Gamma_{C_8}$ whose Fourier--Jacobi expansion begins $\phi \cdot \xi^m + O(\xi^{m+1})$. This confirms the existence of the modular forms required in Assumption \ref{assum} for $C_8$. We obtain the following theorem.

\begin{theorem}
The graded algebra of modular forms on $\Orth^+(\II_{2,2}\oplus D_8(-1))$ is freely generated by the $11$ orthogonal Eisenstein series $\cE_4,\cE_6,\cE_{8,0},\cE_{8,1},\cE_{10,0},\cE_{10,1},\cE_{12,0},\cE_{12,1},\cE_{14,0},\cE_{16,0},\cE_{18,0}$. 
\end{theorem}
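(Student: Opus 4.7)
The plan is to verify Assumption \ref{assum} holds for $R = C_8$ so that the proposition of Section \ref{sec:generators} applies, and then to identify the eleven Eisenstein series as a free generating set by a Hilbert-series dimension count.

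For the first step I match Hilbert series. Table \ref{tablewi} shows that the nine Wirthm\"uller generators for $C_8$ have weight-index pairs $(0,1), (-2,1), (-4,1)$ and $(-2s,2)$ for $3 \leq s \leq 8$, so the eleven weights $4, 6$ and $-k_j + 12m_j$ predicted by Assumption \ref{assum} are exactly $4, 6, 8, 8, 10, 10, 12, 12, 14, 16, 18$. These match the weights of the eleven Eisenstein series in the theorem. By Wirthm\"uller's theorem, the upper bound $U_k := \sum_{r \geq 0} \dim J^{\w, \Orth(D_8)}_{k-12r, D_8, r}$ of \eqref{eq:MF-JF} equals the degree-$k$ Hilbert coefficient of the free polynomial algebra on eleven generators of the above weights. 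The preceding proposition implies that the monomials of weight $k \leq 19$ in the eleven Eisenstein series are linearly independent in $M_k(\Gamma_{C_8})$, and their number equals this Hilbert coefficient. Combined with $\dim M_k(\Gamma_{C_8}) \leq U_k$ this yields
\[
\dim M_k(\Gamma_{C_8}) = U_k \quad \text{for } 0 \leq k \leq 19,
\]
and in this range the monomials in the Eisenstein series span $M_k(\Gamma_{C_8})$.

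The dimension equality forces each map $P_r$ of the exact sequence of Section \ref{sec:upper bound} to surject onto $J^{\Orth(D_8)}_{k, D_8, r}(q^r)$ for $k \leq 19$; division by $\Delta^r$ identifies the target with $J^{\w, \Orth(D_8)}_{k-12r, D_8, r}$. Since every generator weight $-k_j + 12m_j$ is at most $18$, for each Wirthm\"uller generator $\phi_j$ we may choose $\Phi_j \in M_{-k_j + 12m_j}(\Gamma_{C_8})$ with leading Fourier--Jacobi coefficient $\Delta^{m_j} \phi_j \cdot \xi^{m_j}$, and similarly choose $\widetilde{E}_4 \in M_4(\Gamma_{C_8})$ and $\widetilde{E}_6 \in M_6(\Gamma_{C_8})$ with constant Fourier--Jacobi coefficients $E_4$ and $E_6$. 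Thus Assumption \ref{assum} holds for $R = C_8$, and the proposition of Section \ref{sec:generators} gives that $M_*(\Gamma_{C_8})$ is freely generated by $\widetilde{E}_4, \widetilde{E}_6, \Phi_1, \ldots, \Phi_9$.

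Each of these eleven free generators has weight $\leq 18$ and is therefore a polynomial in the eleven Eisenstein series by the spanning statement of the previous step; the Eisenstein series thus generate $M_*(\Gamma_{C_8})$ as a $\CC$-algebra. Since the Hilbert series of $M_*(\Gamma_{C_8})$ coincides with that of a free polynomial ring on eleven generators of weights $4, 6, 8, 8, 10, 10, 12, 12, 14, 16, 18$, the resulting weighted-degree surjection from such a polynomial ring onto $M_*(\Gamma_{C_8})$ must be an isomorphism, and the Eisenstein series form a free generating set. The main obstacle is the Hilbert-series comparison in the first step: one must tabulate $\dim J^{\w, \Orth(D_8)}_{k, D_8, r}$ for all relevant $(k, r)$ via Wirthm\"uller's theorem and verify that the count of weighted monomials in the eleven Eisenstein series equals $U_k$ for each $k \leq 19$.
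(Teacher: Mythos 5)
Your proposal is correct and follows essentially the same route as the paper: the computational proposition gives the lower bound, it is matched against the upper bound of \S\ref{sec:upper bound}, the resulting dimension equalities in weights $\le 18$ force the surjectivity needed to verify Assumption \ref{assum} for $C_8$, and the Eisenstein series are then substituted for the abstract generators. Your observation that the comparison of the two bounds is automatic --- both being the Hilbert coefficients of a polynomial ring on generators of weights $4,6,8,8,10,10,12,12,14,16,18$, by Wirthm\"uller's theorem on one side and by the algebraic independence of the Eisenstein series on the other --- is a clean way to package what the paper calls a ``direct calculation,'' and it makes the tabulation you flag at the end unnecessary.
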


\subsubsection{The other cases}
We have constructed the generators satisfying Assumption \ref{assum} for the root system $C_8$. 
We now construct the desired generators for other $C_n$, $D_n$ and $F_4$ as the pullbacks of the $C_8$ generators. We only do this for $D_8$ and $D_7$ because the other cases are similar. For the root systems $C_8$ and $D_8$, we can choose the generators of Weyl invariant Jacobi forms of the following form at the same time 
\begin{align*}
&D_8:& &\phi_{0,D_8,1}, \phi_{-2,D_8,1}, \phi_{-4,D_8,1}, \psi_{-8,D_8,1}, \phi_{-6,D_8,2}, \phi_{-8,D_8,2}, \phi_{-10,D_8,2}, \phi_{-12,D_8,2}, \phi_{-14,D_8,2}, \\
&C_8:& &\phi_{0,D_8,1}, \phi_{-2,D_8,1}, \phi_{-4,D_8,1}, \psi_{-8,D_8,1}^2, \phi_{-6,D_8,2}, \phi_{-8,D_8,2}, \phi_{-10,D_8,2}, \phi_{-12,D_8,2}, \phi_{-14,D_8,2}.
\end{align*}
The choice means that all generators of the shape $\phi$ are invariant but $\psi_{-8,D_8,1}$ is anti-invariant under the action of the odd sign change of coordinates of $D_8\otimes \CC$. From the relation of the two kinds of generators, we see at once that Assumption \ref{assum} holds for root system $D_8$. We then obtain the structure result for modular forms on $\Gamma_{D_8}=\widetilde{\Orth}^+(\II_{2,2}\oplus D_8(-1))$.

The root lattice $D_7$ is a sublattice of $D_8$. The generators of index 2 of $W(D_7)$-invariant Jacobi forms can be chosen as the pullbacks of the above $\phi_{-2k, D_8, 2}$. Note that the pullback of $\phi_{-14,D_8,2}$ to $D_7$ is equal to the square of the unique $W(D_7)$-invariant weak Jacobi form of weight $-7$ and index 1 up to a constant, and the pullback of $\psi_{-8,D_8,1}$ to $D_7$ is identically zero. The first Fourier--Jacobi coefficient of a pullback of an orthogonal modular form $F$ is equal to the pullback of the first Fourier--Jacobi coefficient of $F$.  We then construct the desired generators corresponding to basic Jacobi forms of index 2 as pullbacks.  Thus Assumption \ref{assum} is satisfied for $D_7$.  

We next consider the case of $F_4$.  For $W(F_4)$-invariant Jacobi forms,  the generators $\varphi_{-8,D_4,2}$ and $\varphi_{-12,D_4,3}$ can be constructed as polynomial combinations of the generators of $W(D_4)$-invariant Jacobi forms. Therefore the corresponding desired modular forms can be constructed as the same polynomial combinations of the basic orthogonal modular forms for $D_4$. Hence Assumption \ref{assum} holds for $F_4$.

\subsection{The case of \texorpdfstring{$E_6$}{E6}}
The root lattice $E_6$ has cyclic discriminant group of prime discriminant $3$. We let $\rho$ denote the attached (dual) Weil representation and write out modular forms taking values in $\rho$ in components: $F = (F_0,F_1,F_2)$. By \cite{BB03} there are isomorphisms in even weight: $$J_{2 \ast,E_6,1} = J_{2\ast,E_6,1}^{W(E_6)} \cong M_{2\ast-3}(\rho) \cong M_{2\ast-3}^+(\Gamma_0(3),\chi),$$ the latter space consisting of modular forms for the quadratic Nebentypus $\chi$ whose Fourier expansions are supported on quadratic residues. The latter isomorphism is simply $F \mapsto (F_0(3\tau)+F_1(3\tau)+F_2(3\tau))/3$; its inverse maps $f(\tau) = \sum_n c(n) q^n \in M_{2*-3}^+(\Gamma_0(3),\chi)$ to $$\Big(\sum_{n \equiv 0 \, (3)} c(n) q^{n/3}, \frac{1}{2} \sum_{n \equiv 1 \, (3)} c(n) q^{n/3}, \frac{1}{2} \sum_{n\equiv 1 \, (3)} c(n) q^{n/3} \Big).$$ Also, one obtains all even-weight modular forms for $\rho$ (and thus all odd-weight Jacobi forms) through the isomorphism $$M_*(\SL_2(\ZZ)) \stackrel{\sim}{\longrightarrow} M_{*+4}(\rho), \; \; f \mapsto (0,f \cdot \eta^8, -f \cdot \eta^8),$$ where $\eta(\tau) = q^{1/24} \prod_{n=1}^{\infty} (1 - q^n)$ as usual. These observations make it possible to compute quickly with Jacobi forms of index $E_6$.

For even $k \ge 4$ we denote by $\cE_k$ the orthogonal Eisenstein series, obtained by lifting the Jacobi Eisenstein series (which correspond as above to the usual Eisenstein series in $M_{k-3}^+(\Gamma_0(3),\chi)$). Also we let $\mathfrak{M}_7$ and $\mathfrak{M}_{15}$ denote the (unique up to scalar multiple) additive lifts in weights $7$ and $15$, which correspond to $1$ and $E_8=E_4^2$ in $M_*(\mathrm{SL}_2(\mathbb{Z}))$ under the isomorphism $$M_*(\mathrm{SL}_2(\mathbb{Z})) \stackrel{\sim}{\longrightarrow} M_{*+4}(\rho) \stackrel{\sim}{\longrightarrow} J_{*+7,E_6,1}.$$

\begin{proposition} 
The series $$\cE_4,\cE_6,\mathfrak{M}_7,\cE_{10},\cE_{12},\mathfrak{M}_{15},\cE_{16},\cE_{18},\cE_{24}$$ satisfy no algebraic relations in weights less than $40$.
\end{proposition}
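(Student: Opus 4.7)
The plan is to follow the same pullback strategy that succeeded in the $C_8$ case. Directly computing with orthogonal modular forms in the nine variables of $\cD(\II_{2,2}\oplus E_6(-1))$ to any useful Fourier precision is prohibitively expensive, so we restrict to a carefully chosen one-dimensional sublocus. Concretely, I would pick a nonzero vector $v\in E_6$ of some moderate norm $m$, and exploit the commutative diagram
\begin{center}\begin{tikzpicture}[node distance=2.5cm, auto]
\node (A) {$J_{k,E_6,1}^{W(E_6)}$};
\node (AA) [right of=A] {$ $};
\node (B) [right of=AA] {$J_{k,m}$};
\node (C) [below of=A] {$M_k(\Gamma_{E_6})$};
\node (D) [below of=B] {$M_k(K(m))$};
\draw[->] (B) to node {$\mathrm{lift}$} (D);
\draw[->] (A) to node {$\mathrm{lift}$} (C);
\draw[->] (A) to node {$e_v$} (B);
\draw[->] (C) to node {$\mathrm{pullback}$} (D);
\end{tikzpicture}\end{center}
so that any algebraic relation among the orthogonal forms $\cE_4,\cE_6,\mathfrak{M}_7,\cE_{10},\cE_{12},\mathfrak{M}_{15},\cE_{16},\cE_{18},\cE_{24}$ descends to a relation among their pullbacks to $M_*(K(m))$. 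Since pullback commutes with the additive lift, the paramodular forms needed are just the Gritsenko lifts of the scalar-index Jacobi forms $e_v E_{*,*}$, which can be computed efficiently.

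First I would use the two isomorphisms described in the excerpt, namely $J_{2\ast,E_6,1}\cong M_{2\ast-3}^+(\Gamma_0(3),\chi)$ in even weight and $M_\ast(\SL_2(\ZZ))\cong M_{\ast+4}(\rho)$ in odd weight, to write out the relevant Jacobi Eisenstein series and the distinguished index-$1$ forms of weight $7$ and $15$ completely in terms of classical scalar-valued modular forms. This makes it straightforward to evaluate $e_v$ on each generator and obtain Fourier expansions of the pulled-back Jacobi forms of scalar index $m$. Then I would invoke the standard Gritsenko/Maass lift from $J_{k,m}$ to $M_k(K(m))$ and compute enough Fourier coefficients of each lift to separate all monomials of total weight less than $40$ in the nine generators.

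The final step is to verify that, in each weight $k<40$, the monomials in $\cE_4,\cE_6,\mathfrak{M}_7,\cE_{10},\cE_{12},\mathfrak{M}_{15},\cE_{16},\cE_{18},\cE_{24}$ are linearly independent in $M_k(K(m))$ after pullback; this is a finite linear algebra problem over $\QQ$, and its success implies the desired algebraic independence upstairs. To guard against false positives coming from computational or conceptual errors (for instance, an incorrect implementation of $e_v$ or of the Gritsenko lift will almost always produce spuriously independent series), I would compute pullbacks of \emph{all} orthogonal Eisenstein series $\cE_k$ for even $k\le 38$ and all additive lifts $\mathfrak{M}_k$ for odd $k$ in range, and then check that every such series not already listed is expressible as a polynomial in the nine generators above, with coefficients matching those predicted by independent arguments (e.g.\ dimension counts and Eisenstein-series degenerations).

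The main obstacle is choosing the vector $v$ well. If $v$ lies on a reflection hyperplane or in a sublattice of small rank, the pullback map $e_v$ will have a large kernel and some of the nine candidate generators will already collapse; one sees this concretely in the $E_6$ case because the odd-weight form $\mathfrak{M}_7$ comes from an anti-invariant Jacobi form whose pullback vanishes along many natural sub-Grassmannians. So I expect that $v$ must be taken generic in $E_6$ and of sufficiently large norm $m$ that the paramodular target $M_*(K(m))$ is large enough to accommodate linear independence up to weight $39$, while still being small enough that the Fourier-coefficient computations terminate in reasonable time; balancing these two demands, and then certifying the computation in the manner described above, is the heart of the argument.
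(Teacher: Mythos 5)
Your proposal is correct and follows essentially the same strategy as the paper: the authors pull back along the norm-$12$ vector $v=(3,2,0,1,1,1)$ in $E_6$, compute the Gritsenko lifts as paramodular forms of level $12$, check by direct computation that the monomials in the nine generators are linearly independent in weights below $40$, and certify the computation by expressing the remaining Eisenstein series (weights $8$, $14$, $20$) uniquely in those generators. Your concerns about choosing $v$ generically and about guarding against false positives are exactly the considerations the paper addresses.
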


\begin{proof} The argument is essentially the same as the $D_8$ lattice. We realized $E_6$ as $\mathbb{Z}^6$ with quadratic form given by the standard Cartan matrix $$\begin{psmallmatrix} 2 & 0 & -1 & 0 & 0 & 0 \\ 0 & 2 & 0 & -1 & 0 & 0 \\ -1 & 0 & 2 & -1 & 0 & 0 \\ 0 & -1 & -1 & 2 & -1 & 0 \\ 0 & 0 & 0 & -1 & 2 & -1 \\ 0 & 0 & 0 & 0 & -1 & 2 \end{psmallmatrix}$$ and evaluated the Jacobi forms above along the line through the norm $12$ vector $v = (3,2,0,1,1,1)$ (such that their Gritsenko lifts may naturally be interpreted as paramodular forms of level $12$). We found that the monomials in these Gritsenko lifts of weights less than $40$ are linearly independent, and verified the correctness of the algorithm by finding the Eisenstein series of weights $8,14,20$ as (unique) polynomials in the generators given above.
\end{proof}

Similarly to the case of $D_8$,  the above proposition implies that Assumption \ref{assum} holds for $E_6$ so we conclude the following result.

\begin{theorem}
The graded algebra of modular forms on $\widetilde{\Orth}^+(\II_{2,2}\oplus E_6(-1))$ is freely generated by the series $\cE_4,\cE_6,\mathfrak{M}_7,\cE_{10},\cE_{12},\mathfrak{M}_{15},\cE_{16},\cE_{18},\cE_{24}$.
\end{theorem}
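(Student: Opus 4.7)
My plan is to carry out the same dimension-chasing argument as in the $D_8$ case, using the proposition just established to reduce to the framework of Section \ref{sec:generators}. Reading off from Table \ref{tablewi}, the seven Wirthm\"{u}ller generators of $J^{\w, W(E_6)}_{*, E_6, *}$ have $(k_j, m_j) \in \{(0,1), (2,1), (5,1), (6,2), (8,2), (9,2), (12,3)\}$, so the shifted weights $12 m_j - k_j$ are $\{12, 10, 7, 18, 16, 15, 24\}$; together with the weights $4$ and $6$ from $M_*(\SL_2(\ZZ))$, these give exactly the weights $4, 6, 7, 10, 12, 15, 16, 18, 24$ of the nine listed forms. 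The upper bound from \S\ref{sec:upper bound} therefore reads
\[
\dim M_k(\Gamma_{E_6}) \leq \sum_{r \geq 0} \dim J^{\w, W(E_6)}_{k - 12r, E_6, r} = F(k)
\]
for every $k$, where $F(k)$ denotes the coefficient of $t^k$ in $\prod_{d \in \{4, 6, 7, 10, 12, 15, 16, 18, 24\}} (1 - t^d)^{-1}$. On the other hand, the preceding proposition means that monomials in the nine listed forms of weight less than $40$ are linearly independent, so the subalgebra $A' \subseteq M_*(\Gamma_{E_6})$ they generate satisfies $\dim A'_k = F(k)$ for $k < 40$. Combining, $\dim M_k(\Gamma_{E_6}) = F(k)$ and $A'_k = M_k(\Gamma_{E_6})$ for every $k < 40$.

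Next, I would verify Assumption \ref{assum} for $E_6$. Sharpness of the upper bound in \S\ref{sec:upper bound} forces each intermediate inequality in its derivation to be an equality, so for $k < 40$ the Fourier--Jacobi projection $M_k(\Gamma_{E_6})(\xi^r) \twoheadrightarrow J^{W(E_6)}_{k, E_6, r}(q^r)$ is surjective and the multiplication map $\psi \mapsto \Delta^r \psi$ from $J^{\w, W(E_6)}_{k - 12r, E_6, r}$ to $J^{W(E_6)}_{k, E_6, r}(q^r)$ is a well-defined bijection. Applied to each Wirthm\"{u}ller generator $\phi_j$ (all of weights $-k_j + 12 m_j \leq 24 < 40$), these two facts yield a modular form $\Phi_j \in M_{-k_j + 12 m_j}(\Gamma_{E_6})$ whose Fourier--Jacobi expansion begins $(\Delta^{m_j} \phi_j) \xi^{m_j} + O(\xi^{m_j + 1})$. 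Setting $\widetilde{E}_4 := \cE_4$ and $\widetilde{E}_6 := \cE_6$, both additive lifts of Jacobi Eisenstein series whose $\xi^0$ coefficients are $E_4$ and $E_6$, Assumption \ref{assum} is verified.

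Finally, I would identify $A'$ with the whole algebra and conclude freeness on the nine listed forms. By the proposition of \S\ref{sec:generators}, $M_*(\Gamma_{E_6}) = \CC[\widetilde{E}_4, \widetilde{E}_6, \Phi_1, \ldots, \Phi_7]$ is a free polynomial algebra on generators of weights $\leq 24$; since all of these lie in $A'_k$ for $k < 40$ by the first step, $A' = M_*(\Gamma_{E_6})$. The canonical surjection $\CC[X_1, \ldots, X_9] \twoheadrightarrow M_*(\Gamma_{E_6})$ sending $X_i$ to the $i$-th listed form respects the grading and has matching Hilbert series $F$ on both sides, so it is an isomorphism; hence the nine listed forms freely generate. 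The main obstacle is the algebraic independence claim of the preceding proposition, which the authors establish by a paramodular-pullback computation---once that lower bound is available, everything else is Hilbert-series bookkeeping wrapped around the machinery of Sections \ref{sec:upper bound} and \ref{sec:generators}.
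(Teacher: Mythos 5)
Your proposal is correct and follows essentially the same route as the paper: use the algebraic-independence proposition as a lower bound, match it against the upper bound of \S\ref{sec:upper bound} (which, as you note, is automatically the Hilbert series of a free algebra on weights $4,6,12m_j-k_j$ because $J^{\w,W(E_6)}_{*,E_6,*}$ is free over $\CC[E_4,E_6]$), force every intermediate inequality to be an equality, extract the forms required by Assumption \ref{assum}, and invoke the Proposition of \S\ref{sec:generators}. The paper compresses all of this into ``similarly to the case of $D_8$''; your write-up just makes the telescoping and the final Hilbert-series identification of the nine listed forms explicit.
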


Gritsenko and Nikulin constructed a cusp form of weight 120 on $\widetilde{\Orth}^+(\II_{2,2}\oplus E_6(-1))$ with the character $\det$ in \cite[Theorem 4.3]{GN18}. This form was constructed as the quasi-pullback of the Borcherds form on $\II_{2,26}$ (see \cite{Bor98}) and we denote it by $\Phi_{120,E_6}$.  It is a strongly 2-reflective Borcherds product; namely, its zero divisor is a sum of rational quadratic divisors associated to vectors of norm $-1$ with multiplicity one:
$$
\Div(\Phi_{120,E_6})=\sum_{\substack{l\in \II_{2,2}\oplus E_6(-1)\\ (l,l)=-2}} l^\perp.
$$

By \cite[Corollary 1.8]{GHS09}, $\widetilde{\Orth}^+(\II_{2,2}\oplus E_6(-1))$ has only one nontrivial character ($\det$) and $\widetilde{\SO}^+(\II_{2,2}\oplus E_6(-1))$ has no nontrivial characters. It is easy to check that every modular form on $\widetilde{\Orth}^+(\II_{2,2}\oplus E_6(-1))$ with character $\det$ vanishes on the 2-reflective divisor. Thus we have 
\begin{equation}
M_k(\widetilde{\Orth}^+(\II_{2,2}\oplus E_6(-1)),\det)=\Phi_{120,E_6}\cdot M_{k-120}(\widetilde{\Orth}^+(\II_{2,2}\oplus E_6(-1)).
\end{equation}

We can also construct  $\Phi_{120,E_6}$ using the Rankin-Cohen-Ibukiyama type differential operator (see \cite[Proposition 2.1]{AI05} for the original version for Siegel modular forms, and \cite[Proposition 2.14]{Klo05}) or \cite[Proposition 5.6]{DKW19} for the version for orthogonal modular forms). The Rankin-Cohen-Ibukiyama type differential operator of our nine generators gives a modular form of weight 120 with character $\det$, which must equal $\Phi_{120,E_6}$ up to a constant.

There is another interesting Borcherds product on $\widetilde{\Orth}^+(\II_{2,2}\oplus E_6(-1))$.  We use the invariant version of Borcherds product in the context of Jacobi forms (see \cite[Theorem 4.2]{Gri18}). The Borcherds product of the unique $W(E_6)$-invariant weak Jacobi form of weight 0 will give a modular form of weight 45 on $\widetilde{\Orth}^+(\II_{2,2}\oplus E_6(-1))$.

\subsection{The case of \texorpdfstring{$E_7$}{E7}}

The lattice index $E_7$ admits nonzero Jacobi forms only in even weight, and via the theta decomposition one obtains an isomorphism $$J_{2*,E_7,1} \stackrel{\sim}{\longrightarrow} M_{2*-7/2}(\rho) \stackrel{\sim}{\longrightarrow} M_{2* - 7/2}(\Gamma_0(4)),$$ first to modular forms which transform with respect to the Weil representation $\rho$ attached to the rank-one lattice generated by a vector of norm one, and then to the Kohnen plus spaces of level $\Gamma_0(4)$, i.e. modular forms $\sum_{n=0}^{\infty} c(n) q^n$ of half-integral weight for which $c(n) = 0$ unless $n \equiv 0,1$ mod $4$. The passage to Kohnen's plus space is through the correspondence $$\Big( \sum_{n \in \mathbb{N}} c(n) q^n, \sum_{n \in \mathbb{N} + 1/4} c(n) q^n \Big) \in M_*(\rho) \leftrightarrow \sum_{n \equiv 0, 1 \, (4)} c(n/4) q^n \in M_*^+(\Gamma_0(4)).$$

The Jacobi Eisenstein series of index $E_7$ and weight $k$ correspond through the previous paragraph to Cohen's \cite{Coh75} half-integral weight Eisenstein series. In particular their Fourier coefficients are generalized Hurwitz class numbers.  We denote by $\cE_k$ the orthogonal Eisenstein series obtained from these as lifts. (It is convenient to refer to $\theta(\tau) = 1 + 2q + 2q^4 + ...$ as the Cohen Eisenstein series of weight $1/2$, corresponding to the orthogonal form $\cE_4$.

\begin{proposition}
The series $$\cE_4,\cE_6,\cE_{10},\cE_{12},\cE_{14},\cE_{16},\cE_{18},\cE_{22},\cE_{24},\cE_{30}$$ satisfy no algebraic relations in weights less than $40$.
\end{proposition}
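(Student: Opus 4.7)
The plan is to repeat the pullback argument that succeeded for the root systems $D_8$ and $E_6$. I would realize $E_7$ concretely as $\ZZ^7$ with the standard Cartan matrix and fix a vector $v \in E_7$ of norm $m$; the evaluation map
\[
e_v : J_{k,E_7,1} \longrightarrow J_{k,m}, \quad (e_v f)(\tau, z) = f(\tau, z \cdot v),
\]
commutes with passage to additive lifts, sending orthogonal modular forms on $\widetilde{\Orth}^+(\II_{2,2} \oplus E_7(-1))$ to paramodular forms of level $m$ on $K(m) \cong \widetilde{\SO}^+(\II_{2,2} \oplus \latt{-2m})$. Any polynomial relation among the series $\cE_4,\cE_6,\ldots,\cE_{30}$ pulls back to a polynomial relation among their images, so it suffices to verify that the Gritsenko lifts of the scalar Jacobi forms $e_v(E_{k,E_7,1})$ for $k \in \{4,6,10,12,14,16,18,22,24,30\}$ satisfy no relations in weights less than $40$.

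Concretely, the Jacobi Eisenstein series $E_{k,E_7,1}$ corresponds via the isomorphism $J_{k,E_7,1} \cong M^+_{k-7/2}(\Gamma_0(4))$ to the Cohen Eisenstein series of half-integral weight $k-7/2$, whose Fourier coefficients are the generalized Hurwitz class numbers described just before the proposition. Restricting along $v$ produces a scalar index-$m$ Jacobi form whose Fourier coefficients are finite sums of Cohen coefficients indexed by lattice vectors of prescribed inner product with $v$; these are computable to arbitrary precision directly from the theta decomposition. Each pullback then admits an explicit Gritsenko lift to a paramodular form on $K(m)$ via standard algorithms for scalar-index Jacobi forms.

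Having fixed $v$ I would enumerate all monomials in the ten generators of total weight less than $40$ (only finitely many, since all weights are positive even integers) and verify linear independence of their paramodular Gritsenko lifts in each weight by computing sufficiently many Fourier coefficients. To guard against implementation errors --- which typically manifest as spurious linear independence rather than as spurious relations --- I would additionally compute the orthogonal Eisenstein series $\cE_k$ for $k \in \{8, 20, 26, 28, 32, 34, 36, 38\}$, namely the even weights between $4$ and $38$ not already in the generating set, and confirm that each is expressible as a \emph{unique} polynomial in the ten proposed generators.

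The main obstacle is the choice of $v$ together with the computational cost. Smaller $m$ makes the paramodular computations faster but risks a coincidental dependence among the pullbacks or nontrivial kernel of $e_v$, while larger $m$ inflates the number of Fourier coefficients needed to distinguish monomials of weight close to $40$. Following the pattern of the $D_8$ and $E_6$ cases, where vectors of norm $24$ and $12$ respectively were selected, I would try several small-norm vectors and settle on the smallest norm producing a linearly independent family of Gritsenko lifts that also passes the redundancy test via the additional Eisenstein series.
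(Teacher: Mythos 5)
Your proposal is correct and follows essentially the same route as the paper: realize $E_7$ as $\ZZ^7$ with its Cartan matrix, pull back the Jacobi Eisenstein series along a suitable vector (the paper uses the norm-$12$ vector $v=(3,2,0,1,1,1,1)$), check linear independence of the monomials in the resulting paramodular Gritsenko lifts in weights below $40$, and validate the computation by expressing the remaining Eisenstein series uniquely in the generators. The only cosmetic difference is which extra Eisenstein series are used for the consistency check (the paper takes all of them up to weight $30$).
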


\begin{proof} 
This is the same argument that we used for the root lattices $D_8$ and $E_6$. We realize $E_7$ as $\mathbb{Z}^7$ with Gram matrix $$\begin{psmallmatrix} 2 & 0 & -1 & 0 & 0 & 0 & 0 \\ 0 & 2 & 0 & -1 & 0 & 0 & 0 \\ -1 & 0 & 2 & -1 & 0 & 0 & 0 \\ 0 & -1 & -1 & 2 & -1 & 0 & 0 \\ 0 & 0 & 0 & -1 & 2 & -1 & 0 \\ 0 & 0 & 0 & 0 & -1 & 2 & -1 \\ 0 & 0 & 0 & 0 & 0 & -1 & 2 \end{psmallmatrix}$$ and evaluated the Jacobi Eisenstein series in their abelian variable along the line through the norm $12$ vector $v = (3,2,0,1,1,1,1)$. We found that the monomials in the resulting Gritsenko lifts of weights less than $40$ are linearly independent, and we verified the correctness of the algorithm by actually computing the lifts of all Eisenstein series of weights up to $30$ and expressing them (uniquely) through the generators in the claim.
\end{proof}

We note that $W(E_7)=\Orth(E_7)$ and $\Orth^+(\II_{2,2}\oplus E_7(-1))=\widetilde{\Orth}^+(\II_{2,2}\oplus E_7(-1))$. 

Similarly, we have the following result.

\begin{theorem}
The graded algebra of modular forms on $\Orth^+(\II_{2,2}\oplus E_7(-1))$ is freely generated by the $10$ orthogonal Eisenstein series $\cE_4,\cE_6,\cE_{10},\cE_{12},\cE_{14},\cE_{16},\cE_{18},\cE_{22},\cE_{24},\cE_{30}$.
\end{theorem}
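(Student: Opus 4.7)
The plan is to follow the same template already used in this section for $D_8$ and $E_6$: verify Assumption \ref{assum} for $R = E_7$ via a dimension comparison, apply the proposition in Section \ref{sec:generators} to obtain a free-algebra description of $M_*$, and finally identify the ten Eisenstein series $\cE_4, \cE_6, \cE_{10}, \cE_{12}, \cE_{14}, \cE_{16}, \cE_{18}, \cE_{22}, \cE_{24}, \cE_{30}$ as a set of generators. Reading the weights $-k_j + 12m_j$ off the $E_7$ row of Table \ref{tablewi} gives exactly $\{10, 12, 14, 16, 18, 22, 24, 30\}$, so together with $\{4, 6\}$ these are the ten weights predicted by Theorem \ref{th:main}.

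First, the preceding proposition provides a concrete lower bound $L(k)$ on $\dim M_k(\Orth^+(\II_{2,2} \oplus E_7(-1)))$ for $k < 40$: namely, the number of weight-$k$ monomials in the ten Eisenstein series. Section \ref{sec:upper bound} then gives the upper bound
$$
\dim M_k(\Orth^+(\II_{2,2} \oplus E_7(-1))) \le \sum_{r \ge 0} \dim J^{\w,W(E_7)}_{k-12r, E_7, r},
$$
and this is a finite sum since $W(E_7)$-invariant weak Jacobi forms vanish for $k < -5m$. By Wirthm\"{u}ller's theorem the right-hand side is the number of weight-$k$ monomials in the symbols $E_4, E_6, \Delta^{m_j} \phi_j$, whose formal weights agree with those of the ten $\cE$'s, so this number is again $L(k)$. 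The upper and lower bounds therefore coincide for every $k < 40$.

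As in the argument preceding Corollary \ref{cor:structure}, this equality of bounds forces $\dim M_k = \sum_r \dim J^{\w,W(E_7)}_{k-12r, E_7, r}$ for all $k \le 30$ and produces, for every $\phi \in J^{W(E_7)}_{k,E_7,m}(q^m)$ with $k \le 30$, an orthogonal modular form in $M_k$ whose Fourier--Jacobi expansion begins $\phi \cdot \xi^m + O(\xi^{m+1})$. Specialising $\phi = \Delta^{m_j} \phi_j$ supplies the required $\Phi_j$, while the additive lifts of the Jacobi Eisenstein series of weights $4$ and $6$ give $\widetilde{E}_4$ and $\widetilde{E}_6$. Assumption \ref{assum} is thus verified, and the proposition in Section \ref{sec:generators} describes $M_*$ as a polynomial ring on ten generators of the stated weights. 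Since each of these generators has weight at most $30 < 40$, it is expressible as a polynomial in the $\cE$'s; hence the $\cE$'s generate $M_*$, and their algebraic independence (by the preceding proposition, transported to a Hilbert-series comparison) upgrades this to a free generation.

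The main obstacle is the preceding algebraic-independence proposition itself, which relies on a computer calculation of paramodular pullbacks of the Gritsenko lifts along a norm-$12$ line in $E_7$ and on checking the linear independence of monomials through weight $39$. The cushion between the range needed ($k \le 30$) and the range verified ($k < 40$) is tighter here than in the $D_8$ and $E_6$ cases, but still comfortable enough to catch computational errors; once the algebraic-independence proposition is in hand, the dimension comparison with the Jacobi-form upper bound is a routine combinatorial identity and the free-algebra description drops out automatically.
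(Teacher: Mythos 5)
Your proposal is correct and matches the paper's own proof, which likewise deduces the theorem from the preceding algebraic-independence proposition by comparing the resulting lower bound with the Jacobi-form upper bound of \S\ref{sec:upper bound} and thereby verifying Assumption \ref{assum}, exactly as in the $D_8$ and $E_6$ cases. The only (inconsequential) slip is your closing remark that the cushion between the needed range $k\le 30$ and the verified range $k<40$ is tighter than for $D_8$: there the margin is only weight $18$ needed versus weight $19$ verified.
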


Gritsenko and Nikulin  also constructed a strongly 2-reflective cusp form of weight 165 on $\Orth^+(\II_{2,2}\oplus E_7(-1))$ with the character $\det$ in \cite[Theorem 4.3]{GN18}. We denote it by $\Phi_{165,E_7}$. This form is equal to the Rankin-Cohen-Ibukiyama type differential operator of our ten generators up to a constant.

By \cite[Corollary 1.8]{GHS09}, $\Orth^+(\II_{2,2}\oplus E_7(-1))$ has only one nontrivial character, namely $\det$, and $\SO^+(\II_{2,2}\oplus E_7(-1))$ has no nontrivial characters. Similarly,
\begin{equation}
M_k(\Orth^+(\II_{2,2}\oplus E_7(-1)),\det)=\Phi_{165,E_7}\cdot M_{k-165}(\Orth^+(\II_{2,2}\oplus E_7(-1))).
\end{equation}

There is also another interesting Borcherds product on $\Orth^+(\II_{2,2}\oplus E_7(-1))$.  The Borcherds product of the unique $W(E_7)$-invariant weak Jacobi form of weight 0 will give a modular form of weight 44 on $\Orth^+(\II_{2,2}\oplus E_7(-1))$.

We end this paper with two remarks.

\begin{remark}
The approach in the present paper does not apply to the other root systems in Wirthm\"{u}ller's theorem. The reason is that for other root systems there are some generators $\phi$ of index 1 of Weyl invariant weak Jacobi forms such that $\Delta\phi$ is not a holomorphic Jacobi form. In these cases the additive lift of $\Delta \phi$ is not holomorphic (see \cite[Theorem 14.3]{Bor98}).
\end{remark}

\begin{remark}
The bigraded ring of $W(E_8)$-invariant weak Jacobi forms is not a free algebra \cite{Wan18} so the methods of this paper also do not apply to the $E_8$-lattice. However, it was announced in \cite{HU14} that the graded ring of modular forms on $\Orth^+(\II_{2,2}\oplus E_8(-1))$ is freely generated by forms of weights 4, 10, 12, 16, 18, 22, 24, 28, 30, 36, 42. Assuming the structure result of \cite{HU14}, it was shown in \cite{DKW19} that the generators can be constructed as the additive lifts of Jacobi Eisenstein series.
\end{remark}

The proof of Theorem \ref{th:main} is finished. We formulate the groups $\Gamma_R$ and the weights of generators in the following theorem. The group $\Orth_1^+(\II_{2,2}\oplus D_4(-1))$ is defined in \S \ref{subsec:D}.

\begin{theorem}\label{th:generators}
For the arithmetic groups in the tables below, the graded rings of modular forms are free on generators of the given weights.

\begin{longtable}{|l|l|l|}
\caption{Some free algebras of orthogonal modular forms} \\
\hline
Root system &  Group &   Weights of generators                       \\ \hline
$A_1$ & $\Orth^+(\II_{2,2}\oplus A_1(-1))$& $4, 6, 10, 12$. \\ \hline
$A_2$ & $\widetilde{\Orth}^+(\II_{2,2}\oplus A_2(-1))$& $4, 6, 9, 10, 12$. \\ \hline
$B_2$ & $\Orth^+(\II_{2,2}\oplus 2A_1(-1))$& $4, 6, 8, 10, 12$. \\ \hline
$G_2$ & $\Orth^+(\II_{2,2}\oplus A_2(-1))$& $4, 6, 10, 12, 18$. \\ \hline
$A_3$ & $\widetilde{\Orth}^+(\II_{2,2}\oplus A_3(-1))$& $4, 6, 8, 9, 10, 12$. \\ \hline
$B_3$ & $\Orth^+(\II_{2,2}\oplus 3A_1(-1))$& $4, 6, 6, 8, 10, 12$. \\ \hline
$C_3$ & $\Orth^+(\II_{2,2}\oplus A_3(-1))$& $4, 6, 8, 10, 12, 18$. \\ \hline
$A_4$ & $\widetilde{\Orth}^+(\II_{2,2}\oplus A_4(-1))$& $4, 6, 7, 8, 9, 10, 12$. \\ \hline
$B_4$ & $\Orth^+(\II_{2,2}\oplus 4A_1(-1))$& $4, 4, 6, 6, 8, 10, 12$. \\ \hline
$C_4$ & $\Orth_1^+(\II_{2,2}\oplus D_4(-1))$& $4, 6, 8, 10, 12, 16, 18$. \\ \hline
$D_4$ & $\widetilde{\Orth}^+(\II_{2,2}\oplus D_4(-1))$& $4, 6, 8, 8, 10, 12, 18$. \\ \hline
$F_4$ & $\Orth^+(\II_{2,2}\oplus D_4(-1))$& $4, 6, 10, 12, 16, 18, 24$. \\ \hline
\end{longtable}
\newpage
\begin{longtable}{|l|l|l|}
\caption{Some free algebras of orthogonal modular forms, continued} \\
\hline
 Root system &  Group &   Weights of generators                       \\ \hline
$A_5$ & $\widetilde{\Orth}^+(\II_{2,2}\oplus A_5(-1))$& $4, 6, 6, 7, 8, 9, 10, 12$. \\ \hline
$C_5$ & $\Orth^+(\II_{2,2}\oplus D_5(-1))$& $4, 6, 8, 10, 12, 14, 16, 18$. \\ \hline
$D_5$ & $\widetilde{\Orth}^+(\II_{2,2}\oplus D_5(-1))$& $4, 6, 7, 8, 10, 12, 16, 18$. \\ \hline
$A_6$ & $\widetilde{\Orth}^+(\II_{2,2}\oplus A_6(-1))$& $4, 5, 6, 6, 7, 8, 9, 10, 12$. \\ \hline
$C_6$ & $\Orth^+(\II_{2,2}\oplus D_6(-1))$& $4, 6, 8, 10, 12, 12, 14, 16, 18$. \\ \hline
$D_6$ & $\widetilde{\Orth}^+(\II_{2,2}\oplus D_6(-1))$& $4, 6, 6, 8, 10, 12, 14, 16, 18$. \\ \hline
$E_6$ & $\widetilde{\Orth}^+(\II_{2,2}\oplus E_6(-1))$& $4,6,7,10,12,15,16,18,24$. \\ \hline
$A_7$ & $\widetilde{\Orth}^+(\II_{2,2}\oplus A_7(-1))$& $4, 4, 5, 6, 6, 7, 8, 9, 10, 12$. \\ \hline
$C_7$ & $\Orth^+(\II_{2,2}\oplus D_7(-1))$& $4, 6, 8, 10, 10, 12, 12, 14, 16, 18$. \\ \hline
$D_7$ & $\widetilde{\Orth}^+(\II_{2,2}\oplus D_7(-1))$& $4, 5, 6, 8, 10, 12, 12, 14, 16, 18$. \\ \hline
$E_7$ & $\Orth^+(\II_{2,2}\oplus E_7(-1))$ & $4,6,10,12,14,16,18,22,24,30$. \\ \hline
$C_8$ & $\Orth^+(\II_{2,2}\oplus D_8(-1))$& $4, 6, 8, 8, 10, 10, 12, 12, 14, 16, 18$. \\ \hline
$D_8$ & $\widetilde{\Orth}^+(\II_{2,2}\oplus D_8(-1))$& $4, 4, 6, 8, 10, 10, 12, 12, 14, 16, 18$. \\ \hline

\end{longtable}

\end{theorem}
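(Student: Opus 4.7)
The plan is to treat Theorem~\ref{th:generators} as a consolidation of the case-by-case analysis already carried out in \S\ref{sec:free algebras}, so the proof reduces to three bookkeeping tasks: (i) identifying the group $\Gamma_R$ for each listed root system, (ii) invoking Assumption~\ref{assum} and the proposition of \S\ref{sec:generators} to conclude freeness, and (iii) reading off the weights of the $r+3$ generators from Table~\ref{tablewi}.

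First I would dispatch the group identifications. For a root system $R$ with root lattice $L_R$, the key observation (used repeatedly in \S\ref{sec:free algebras}) is that whenever the natural map $W(R) \to \Orth(L_R^\vee / L_R)$ is surjective, the subgroup $\latt{\widetilde{\Orth}^+(\II_{2,2}\oplus L_R(-1)), W(R)}$ coincides with $\Orth^+(\II_{2,2}\oplus L_R(-1))$; otherwise it lies strictly between the discriminant kernel and the full orthogonal group. Going through the list: $W(A_n) \subset \widetilde{\Orth}(A_n)$, giving $\Gamma_{A_n}=\widetilde{\Orth}^+$; $W(B_n)=\Orth(nA_1)$ surjects onto the discriminant group, giving $\Gamma_{B_n}=\Orth^+$; the case $C_n$ splits into $n\ge 5$ (full $\Orth^+$) and $n=4$ (the intermediate group $\Orth_1^+$ defined in \S\ref{subsec:D}); for $D_n$, $W(D_n)$ lies inside $\widetilde{\Orth}(D_n)$; for $F_4, G_2$ the Weyl group is the full orthogonal group of $D_4, A_2$; and for $E_6, E_7$ a direct check yields the discriminant kernel and the full orthogonal group respectively. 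All of these match the second column of the table.

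Next I would quote, for each row, the verification of Assumption~\ref{assum} already given in \S\ref{sec:free algebras}: \S5.1 for $A_n, B_n, G_2, C_3$ (where every Wirthm\"uller generator has index $1$ or is a square of an index-$1$ generator, so that Gritsenko lifts supply the required forms directly); \S\ref{subsec:D} for $C_n, D_n, F_4$ with $n\ge 4$ (built from the $C_8$ case via pullbacks of the generators); the $E_6$ case from \S5.3 (using the isomorphism with modular forms on $\Gamma_0(3)$ with quadratic Nebentypus and the pullback-to-paramodular verification); and the $E_7$ case from \S5.4 (via the identification with Kohnen's plus space at level $4$). With Assumption~\ref{assum} established, the proposition of \S\ref{sec:generators} gives that $M_*(\Gamma_R)$ is freely generated by $\widetilde{E}_4, \widetilde{E}_6, \Phi_1, \ldots, \Phi_{r+1}$.

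Finally I would tabulate the weights. By construction $\Phi_j$ has weight $-k_j + 12 m_j$, where $(k_j,m_j)$ are the weight-index data in Table~\ref{tablewi}. Running through each row with this formula (for instance $E_7$ gives $-k_j+12m_j \in \{12,10,18,16,14,24,22,30\}$, which together with $4,6$ produces the list $4,6,10,12,14,16,18,22,24,30$) reproduces the weight lists in the third column of the table. The only step that required genuine work (as opposed to bookkeeping) was the verification of Assumption~\ref{assum} in the hard cases $C_8$, $E_6$, $E_7$, where index-$>1$ generators have no direct additive-lift source; that obstacle was overcome in \S\ref{sec:free algebras} by the pullback-to-paramodular computation combined with matching the dimension lower bound against the Jacobi-form upper bound from \S\ref{sec:upper bound}.
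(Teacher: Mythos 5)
Your proposal is correct and follows essentially the same route as the paper: Theorem \ref{th:generators} is a tabulation of Theorem \ref{th:main}, obtained by combining the group identifications and case-by-case verification of Assumption \ref{assum} in \S\ref{sec:free algebras} with the proposition of \S\ref{sec:generators}, and the weight lists are exactly $4,6$ together with $-k_j+12m_j$ read off from Table \ref{tablewi} (your $E_6$ and $E_7$ spot checks are accurate). The paper likewise treats the hard cases $C_8$, $E_6$, $E_7$ as the only ones requiring genuine work, via the pullback-to-paramodular computation matched against the upper bound of \S\ref{sec:upper bound}.
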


\bigskip

\noindent
\textbf{Acknowledgements} 
H. Wang would like to thank Zhiwei Zheng for numerous stimulating discussions on related topics, and he is grateful to Max Planck Institute for Mathematics in Bonn for its hospitality and financial support. B. Williams is supported by a fellowship of the LOEWE research center ``Uniformized Structures in Arithmetic and Geometry". The authors thank Eberhard Freitag, Valery Gritsenko and Aloys Krieg for their comments.

\bibliographystyle{amsalpha}

\end{document}